\newtheorem{thm}{Theorem}
\newtheorem{cor}{Corollary}
\newtheorem{lem}{Lemma}
\newtheorem{defn}{Definition}
\newtheorem{assumption}{Assumption}
\newtheorem{prop}{Proposition}
\newcommand{\expect}[1]{\mathbb{E}\left[#1\right]}
\newcommand{\script}[1]{{{\cal{#1} }}}
\newcommand{\transpose}{\top}
\begin{document}

\title
  {Random Variables with Measurability Constraints with Application to Opportunistic Scheduling}
\author{Michael J. Neely \\ University of Southern California\\ \url{https://viterbi-web.usc.edu/~mjneely/}\\
}

\markboth{}{Neely}

\maketitle

\begin{abstract} 
This paper proves a representation theorem regarding sequences of 
random elements that take values in a Borel space and are measurable with respect to the sigma algebra generated by an 
arbitrary union of sigma algebras. This, together with a related representation theorem of Kallenberg, is used to characterize the set of multidimensional decision vectors in a discrete time stochastic control problem with measurability and causality constraints, including opportunistic scheduling problems for time-varying communication networks. A network capacity theorem for these systems is refined, without requiring an implicit and arbitrarily complex extension of the state space, by introducing two measurability assumptions and using a theory of constructible sets. An example that makes use of well known pathologies in descriptive set theory is given to show a nonmeasurable scheduling scheme can outperform all measurable scheduling schemes. 
\end{abstract} 

\section{Introduction}

Let $(\Omega, \script{F})$ and $(\Gamma, \script{G})$ be two measurable spaces. 
Let $X:\Omega \rightarrow \mathbb{R}$ be a Borel measurable function and let $Y:\Omega\rightarrow \Gamma$
be a measurable function. The Doob-Dynkin lemma states that $X$ is $\sigma(Y)$-measurable if and only if 
$X=h(Y)$ for some Borel measurable function $h:\Gamma\rightarrow\mathbb{R}$  \cite{doob-stochastic-processes}\cite{kallenberg}\cite{williams-martingale}. 
Suppose we know only that 
$X$ is $\sigma(\script{H}_1 \cup \script{H}_2)$-measurable, where $\script{H}_1\subseteq \script{F}$ and $\script{H}_2\subseteq\script{F}$ are
two subsigma algebras on $\Omega$.   Is it necessarily true that $X=h(Y_1, Y_2)$ for
some Borel measurable function $h:[0,1]^2\rightarrow\mathbb{R}$ and some Borel measurable functions $Y_i:\Omega\rightarrow [0,1]$ such that $Y_i$ is $\script{H}_i$-measurable for each  $i \in \{1,2\}$?  

This question motivates the more general question of characterizing all sequences of Borel measurable
functions that satisfy certain \emph{measurability constraints}.   Fix $K$ 
as a nonempty set that is finite or countably infinite. For each $k \in K$ let $X_k:\Omega\rightarrow\mathbb{R}$ be a function.
Let $J$ be a nonempty set with arbitrarily large cardinality.  For  each $j \in J$, let $\script{H}_j\subseteq \script{F}$ be a given subsigma algebra on $\Omega$. We characterize all $(X_k)_{k \in K}$ that satisfy 
\begin{equation} \label{eq:intro-constraints} 
X_k \mbox{ is $\sigma(\cup_{j \in J_k} \script{H}_j)$-measurable} \quad \forall k \in K 
\end{equation} 
where $J_k$ are given nonempty sets that satisfy $J_k\subseteq J$ for all $k \in K$. 
The first result is that $(X_k)_{k\in K}$ satisfies \eqref{eq:intro-constraints} if and only 
if  
\begin{equation} \label{eq:result} 
X_k= h_k((Y_j)_{j \in \tilde{J}_k}) \quad \forall k \in K
\end{equation} 
for some Borel measurable functions $Y_j:\Omega\rightarrow [0,1]$ that are $\script{H}_j$-measurable for each $j \in J$,   some countable subsets $\tilde{J}_k\subseteq J_k$, and some 
Borel measurable functions $h_k:[0,1]^{\tilde{J}_k}\rightarrow\mathbb{R}$ for each $k \in K$. Measurability of each function $h_k$ is with respect to the product sigma algebra on $[0,1]^{\tilde{J}_k}$. 
Observe that each $X_k$ in \eqref{eq:result} draws from the the \emph{same} collection of functions $(Y_j)_{j\in J}$ (rather than defining variables $Y_{j,k}$ separately for each $k$). In particular, a single function $Y_j$ can be used to represent the influence of the sigma algebra $\script{H}_j$ whenever that influence is required.   A special case of this result gives an affirmative answer to the question posed
in the first paragraph.  The result \eqref{eq:intro-constraints}-\eqref{eq:result} immediately generalizes to allow $X_k$ to be a random element of any Borel space, such as the space $(\mathbb{R}^m, \script{B}(\mathbb{R}^m))$ for some positive integer $m$. 

\subsection{Applications to stochastic control} 

The measurability constraints \eqref{eq:intro-constraints} have applications to stochastic control. 
For example, consider a discrete time system that operates over 
time slots $k \in \{1, 2, 3, \ldots\}$ according to some probability triplet $(\Omega, \script{F}, P)$. Let $S_k:\Omega\rightarrow\Omega_S$ be the \emph{system state} that can be observed at time $k$, which is a random element associated with some  measurable space $(\Omega_S, \script{F}_S)$ with arbitrary 
structure. Every step $k$ the system controller observes $S_k$ 
and chooses a \emph{decision} that affects a vector of attributes $X_k \in \mathbb{R}^m$, 
where $m$ is some fixed positive integer. The vector $X_k$  is required to satisfy the following \emph{system constraints} 
\begin{equation} \label{eq:system-constraints}
X_k \in C(S_k) \quad \forall k \in \{1, 2, 3, \ldots\}
\end{equation} 
where  $C:\Gamma\rightarrow Pow(\mathbb{R}^m)$ is a given set-valued function that maps the observed state $S_k$ to a subset of $\mathbb{R}^m$ that consists of all decision options for $X_k$ (where $Pow(\mathbb{R}^m)$ denotes the power set of $\mathbb{R}^m$).  The next state $S_{k+1}$ can be influenced by the prior states and decisions according to some model supported by the probability space, such as a Markov chain model.  The $m$ components of $X_k$ can represent rewards, prices, power expenditures, and so on, associated with time slot $k$, and can also include values that affect the next state.  This work is  motivated by the application of \emph{opportunistic scheduling}, 
where $(S_k)_{k=1}^{\infty}$ are independent and identically distributed (i.i.d.) random channel states that are sequentially observed in a wireless communication system at the start of each slot $k$, $X_k$ is a vector of transmission rates over $m$ different channels, and $C(S_k)$ is the set of all possible transmission rate vectors that can be supported on slot $k$ when the observed channel state is $S_k$.  

Consider \emph{causal and measurable} decision policies that are constrained to make decisions that yield valid random variables and are based only on observations of the past.  Assume the decisions can be \emph{stochastic}, so they can be informed by an external source of randomness that is represented by some 
sigma algebra $\script{G} \subseteq \script{F}$ on $\Omega$. For example, $\script{G}$ might be the sigma algebra generated by an infinite sequence of i.i.d. random elements in some arbitrary measurable space and whose values are selected by an independent computing device at time $0$ (before any control decisions are made).  
Then we require: 
\begin{equation} \label{eq:causal-measurable} 
X_k \mbox{ is $\sigma(\sigma(S_1)\cup \cdots \cup\sigma(S_k) \cup \script{G})$-measurable} \quad \forall k \in \{1, 2, 3, \ldots\}
\end{equation} 
where $\sigma(S_i)$ is the sigma algebra generated by the random element $S_i$.  Under any such causal and measurable decision policy the result \eqref{eq:result}  implies 
\begin{equation} \label{eq:hard-boiled}
 X_k = h_k(Y_1, \ldots, Y_k, R) \quad \forall k \in \{1, 2, 3, \ldots\}
 \end{equation} 
for some Borel measurable functions $h_k$, some $\script{G}$-measurable random variable $R$ that takes values in $[0,1]$, and some $\sigma(S_i)$-measurable random variables $Y_i$ that take values in $[0,1]$. It is interesting that the \emph{same} random variables $R, \{Y_i\}_{i=1}^{\infty}$ can be used to construct $X_k$ for all time steps $k$. In particular: 
 \begin{itemize} 
 \item While the observed random elements $S_i$ are associated with an arbitrarily complex measurable space $(\Omega_S, \script{F}_S)$ where $\Omega_S$ has arbitrary cardinality,  it suffices to boil these random elements down to real-valued random variables  $Y_i:\Omega\rightarrow [0,1]$ where each $Y_i$ is a measurable function of $S_i$.
 \item While the external source of randomness is from an arbitrarily complex sigma algebra $\script{G}$ on $\Omega$, it suffices to boil it down to a single draw of a random variable $R:\Omega\rightarrow [0,1]$ that is $\script{G}$-measurable. 
 \end{itemize} 
 
 The constraint $X_k \in C(S_k)$ seems to require knowledge of the 
 full value of $S_k$, while the form \eqref{eq:hard-boiled} says this constraint must be  sustained only by observing the ``boiled'' variables $Y_1, \ldots, Y_k, R$  (all of which take values in $[0,1]$). 
 In particular, all policies that satisfy \eqref{eq:system-constraints}-\eqref{eq:causal-measurable} are characterized according to the following choices: 
 \begin{enumerate} 
 \item Choose a single $\script{G}$-measurable random variable $R:\Omega\rightarrow [0,1]$. 
 \item Choose Borel measurable functions $\theta_k:\Omega_S\rightarrow [0,1]$ from which $Y_k=\theta_k(S_k)$ are defined for all $k \in \{1, 2, 3, \ldots\}$. 
 \item For each $k \in \{1, 2, 3, \ldots\}$, define a Borel measurable function $h_k:[0,1]^{k+1}\rightarrow \mathbb{R}^m$ such that 
 \begin{equation} \label{eq:over-boil} 
 h_k(\theta_1(S_1(\omega)), \theta_2(S_2(\omega)), \ldots, \theta_k(S_k(\omega)), R(\omega)) \in C(S_k(\omega)) \quad \forall \omega \in \Omega
 \end{equation} 
 \end{enumerate}  
 If the constraint \eqref{eq:over-boil} is impossible to meet, then no causal decision policy that meets the required measurability constraints exists. 
 Sufficient conditions for \eqref{eq:over-boil} are given in Section \ref{section:control} using two measurability assumptions that include the 
 existence of a \emph{measurable choice function}. Measurable choice is a classical problem in descriptive set theory and conditions for existence in certain cases are found in the  \emph{selection theorems} 
 of  \cite{blackwell-images}\cite{kuratawsi-selection}\cite{savage-gamble-selection}\cite{maitra-dp}\cite{aumann-measurable-choice}\cite{von-neumann-selection}\cite{srivastava-borel}. In particular, the works 
 \cite{savage-gamble-selection}\cite{maitra-dp}\cite{aumann-measurable-choice} use measurable choice to establish cost minimizing policies for economics and dynamic programming applications.  Our work gives a simple application to the multidimensional \emph{capacity region} in the opportunistic scheduling problem.  A theory of \emph{constructible sets} from \cite{constructible-book}, together with measurable choice, is used to refine the capacity results of \cite{sno-text}\cite{now}\cite{tass-server-allocation}. 
 We also apply classical pathological cases from descriptive set theory to show an example where a  nonmeasurable policy produces significantly larger time averages in comparison to any measurable policy.

\subsection{Related work}  

The Doob-Dynkin lemma is proven on page 603 in  \cite{doob-stochastic-processes} (see also 
Lemma 1.13 in \cite{kallenberg}, and \cite{williams-martingale}).    Recent discussion of this lemma is in \cite{doob-dynkin-similar-arxiv}. The Doob-Dynkin lemma 
can be used to directly characterize all  $\sigma(\script{H}_1\cup \script{H}_2)$-measurable functions $X:\Omega\rightarrow\mathbb{R}$ in the special case when $\script{H}_i=\sigma(Y_i)$ for some random variables $Y_i$ for $i \in \{1,2\}$. In that special case the Doob-Dynkin lemma implies $X=h(Y_1, Y_2)$.  The difficulty is that the sigma algebras $\script{H}_1$ and $\script{H}_2$ can be arbitrarily complex, including sigma algebras that cannot be generated by any real-valued random variable. An early version of this question was addressed by the author on  StackExchange in 
\cite{stackexchange-sigma-algebra} using Dynkin's multiplicative class 
theorem (see Theorem 18.51 in \cite{driver-monotone-class}) together with several techniques that are refined and generalized in the current paper.\footnote{The question of $X$ being 
$\sigma(\script{H}_1\cup \script{H}_2)$-measurable was posed by the author as a StackExchange question in \cite{stackexchange-sigma-algebra}.   Users initially conjectured the representation $X=h(Y_1, Y_2)$ was generally impossible but suggested proving a weaker representation by a monotone class argument; the strong result was eventually proven 
by the author using Dynkin's multiplicative class theorem \cite{driver-monotone-class}.}   Rather than using a multiplicative class argument, the current paper establishes a related sigma algebra fact that is of interest in its own right. 


For the probability space $(\Omega, \script{F}, P)$ used in the stochastic control 
problem, consider the special case when we are given some measurable space $(\Omega_Q, \script{F}_Q)$ and we are told $\script{G}=\sigma(Q)$ for some 
random element $Q:\Omega\rightarrow \Omega_Q$ that is measurable with respect to $(\Omega, \script{F})$ and $(\Omega_Q, \script{F}_Q)$.   The causal and measurable constraint \eqref{eq:causal-measurable} 
is thus equivalent to 
\begin{equation*}
X_k \mbox{ is $\sigma(\sigma(S_1)\cup \cdots \cup \sigma(S_k) \cup \sigma(Q))$-measurable} \quad \forall k \in \{1, 2, 3, \ldots\}
\end{equation*}
from which the Doob-Dynkin lemma immediately implies 
\begin{equation} \label{eq:intro}
X_k = h_k(S_1, S_2, \ldots, S_k, Q) \quad \forall k \in \{1, 2, 3, \ldots\}
\end{equation}  
for some measurable function $h_k:\Omega_S^k\times \Omega\rightarrow\mathbb{R}$.  However, the reason \eqref{eq:hard-boiled} is  stronger (and nontrivial) is that the $\script{G}$-measurable random variable $R:\Omega\rightarrow [0,1]$ takes values only in $[0,1]$ regardless of the complexity of the random element $Q$ that generates $\script{G}$; Similarly each $Y_j$ is $\script{H}_j$-measurable and takes values on $[0,1]$.


An important representation theorem related to \eqref{eq:hard-boiled} is given by Kallenberg in Proposition 5.13 of \cite{kallenberg}: There it is shown that if $X$ is a random element of a Borel space and $S$ is a random element of an arbitrary measurable space, and if the probability space is \emph{extended} (using standard product space concepts) to include a random variable $U$ that is uniformly distributed over $[0,1]$ and that is independent of everything else, then $X=g(S,W)$ almost surely, where $g$ is some measurable function and $W$ is some random variable that is uniformly distributed over $[0,1]$ and
independent of $S$.  It is not difficult to strengthen this result to \emph{surely} rather than \emph{almost surely}  
(this is done in Section \ref{section:kallenberg} for completeness).  When applied to the stochastic control problem, if we assume $(\Omega, \script{F}, P)$ is the already-extended space and $\script{G}=\sigma(U)$, the result immediately implies 
$$ X_k = g_k(S_k, W_k) \quad \forall k \in \{1, 2, 3, \ldots\}$$
where for each $k \in \{1, 2, 3, \ldots\}$, $g_k$ is a Borel measurable function and 
$W_k$ is a random variable that is uniformly distributed over $[0,1]$ and independent of $S_k$.   However, the $g_k$ functions cannot be viewed as defining a control policy because the value $W_k$ and its structure within the $g_k$ function can depend  on the realizations of $S_1, \ldots, S_{k-1}$. 

Selection theorems for measurable choice are developed by Blackwell and Ryll-Nardzewski \cite{blackwell-images}, Kuratowski and Ryll-Nardzewski \cite{kuratawsi-selection},  
and Von Neumann  \cite{von-neumann-selection} (see also \cite{srivastava-borel}\cite{banach-selection}). Measurable choice for economics and 
dynamic programming are considered by Maitra \cite{maitra-dp}, Aumann  \cite{aumann-measurable-choice}, and
Dubins and Savage \cite{savage-gamble-selection}.  For example, 
\cite{maitra-dp} considers a set $S$ for \emph{current states} and a set $A$ for \emph{action choices}, where $S$ is a Borel subset of a Polish space and $A$ is a compact metric space, and shows (see also \cite{savage-gamble-selection}) that if $u$ is a bounded upper semi-continuous function on $S \times A$ then there is a measurable choice function $\psi:S\rightarrow A$ such that 
$$ u(s, \psi(s)) = \max_{a \in A} u(s,a) \quad \forall s \in S$$
Continuous time control with measurable choice is in \cite{olech-selection-continuous-control}. 

Fundamental optimality 
properties for dynamic programming with general state and action sets 
are in \cite{blackwell-memoryless}\cite{blackwell-discount-dp}\cite{maitra-dp}\cite{schal-dp}. For example, Blackwell in  \cite{blackwell-memoryless} 
considers one step of a finite stage dynamic program with Borel spaces $A, S, H$ where $A$ is the set of possible actions, $S$ the set of current states, and $H$ the set of historical states from the past (see also \cite{savage-gamble-selection}). The one-step goal is to observe $s \in S$ and $h \in H$ and choose an action $a \in A$ to maximize a utility $u(s, a)$ (so the utility depends only on the current state and action). Mild conditions imply that for any policy that chooses $a$ as a measurable function of both $s$ and $h$, and for any $\epsilon>0$, 
there is a measurable \emph{memoryless strategy} that chooses  $a \in A$ based only on the current state $s$ that achieves utility at most $\epsilon$ worse (for almost all $a,h$ defined in a probabilistic sense). 
However, \cite{blackwell-memoryless} also gives a counter-example to show this is impossible without the mild conditions. This counter-example is similar in spirit to the example in Section \ref{section:nonmeasurable} of the current paper. However, the structure of our example is different: It treats the infinite horizon opportunistic scheduling problem;  It uses a different pathological set from descriptive set theory than the one used in \cite{blackwell-memoryless}; It compares a nonmeasurable policy to all possible measurable policies, rather than comparing a measurable policy of two variables to all possible measurable policies in one variable.  Optimality of stationary policies in multi-step dynamic programs over Borel spaces  is considered in \cite{blackwell-discount-dp}\cite{maitra-dp} and related nonstationary problems are 
in \cite{schal-dp}. Nonmeasurable gambling strategies  are treated in  \cite{savage-gamble-selection}.

Tassiulas and Ephremides establish the \emph{capacity region}  for a class of time-varying networks 
in \cite{tass-server-allocation} and prove that a max-weight rule stabilizes the network whenever possible. Capacity regions for more general systems that choose $X_k \in C(S_k)$ are treated in  \cite{now}\cite{sno-text}\cite{neely-power-network-jsac}, see also related problems of network utility maximization 
\cite{shroff-opportunistic}\cite{atilla-primal-dual-jsac}\cite{stolyar-greedy}\cite{neely-fairness-ton} 
and energy minimization \cite{neely-energy-it}. The general 
result in \cite{sno-text} makes implicit assumptions regarding measurability and probability space extension. 
The current paper refines a capacity theorem from \cite{sno-text} without 
extending the space by introducing two measurability assumptions, including a measurable choice assumption, together with a property of \emph{constructible sets} from \cite{constructible-book}. 

The field of descriptive set theory was initiated in the classic works of 
Souslin \cite{souslin-dst} and 
Lusin \cite{lusin-descriptive}. Souslin showed existence of a two dimensional 
Borel set that has a non-Borel projection onto the first dimension.  Examples of multidimensional 
Borel sets that do not contain a measurable choice function are developed by 
 Blackwell \cite{blackwell-borel-not-containing-graph},  Novikoff \cite{novikoff-borel-example}, Sierpi{\'n}ski  \cite{sierpinski-borel-example}, and  Addison \cite{addison-borel-example}
 (see also Example 5.1.7 in \cite{srivastava-borel}). In \cite{sierpinski-strange} Sierpi{\'n}ski constructs a subset of $[0,1]$ that has inner measure 0 and outer measure 1  
 (see also  \cite{alexander-strange}\cite{strange-uniform}). These classic pathological examples are used in Section \ref{section:counter-example} to show examples where nonmeasurable decisions can be used in the opportunistic scheduling problem to enable time averages that are superior to those achieved by any measurable policy.

\section{Preliminaries}

\subsection{Terminology} 

Let $\mathbb{N}=\{1, 2, 3, \ldots\}$ denote the natural numbers,  $\mathbb{R}$  the real numbers, and $\script{B}(\mathbb{R})$  the standard Borel sigma algebra on $\mathbb{R}$. For $A \in \script{B}(\mathbb{R})$ define $\script{B}(A) = \{B \in \script{B}(\mathbb{R}) : B \subseteq A\}$. 
A \emph{measurable space} is a pair $(\Omega, \script{F})$ where $\Omega$ is a nonempty set and $\script{F}$ is a sigma algebra on $\Omega$.   Suppose $(\Omega_1, \script{F}_1)$ and $(\Omega_2, \script{F}_2)$ are two measurable spaces.  Let $\script{H}\subseteq \Omega_1$ be another sigma algebra on $\Omega_1$.  With respect to the measurable space 
$(\Omega_2, \script{F}_2)$, a function $g:\Omega_1\rightarrow \Omega_2$ is said to be \emph{$\script{H}$-measurable}
if 
$$ g^{-1}(A) \in \script{H} \quad \forall A \in \script{F}_2$$
where $g^{-1}(A) = \{\omega \in \Omega_1 : g(\omega) \in A\}$. 
With respect to the two measurable 
spaces $(\Omega_1, \script{F}_1)$ and $(\Omega_2, \script{F}_2)$, a 
function $g:\Omega_1\rightarrow \Omega_2$ is said to be \emph{measurable} if it is $\script{F}_1$-measurable. 
Two measurable spaces $(\Omega_1, \script{F}_1)$ and $(\Omega_2, \script{F}_2)$ are \emph{isomorphic} if there is a bijective function $b:\Omega_1\rightarrow \Omega_2$ that is measurable and has a measurable inverse; such a function  is called an \emph{isomorphism}.  A measurable space $(\Omega, \script{F})$ is called a \emph{Borel space} if it is isomorphic to $(A, \script{B}(A))$ for some $A \in \script{B}([0,1])$.   
If $(\Omega_2, \script{F}_2)$ is a Borel space then a measurable function $g:\Omega_1\rightarrow\Omega_2$ is sometimes referred to as a \emph{Borel measurable function} as a reminder that the target space is a Borel space. 


Fix $J$ as a nonempty set (possibly uncountably infinite).  Let $(\Omega_j, \script{F}_j)$ be measurable spaces for each $j \in J$. Define 
$$ \times_{j \in J} \Omega_j = \{(x_j)_{j \in J} : x_j \in \Omega_j \quad \forall j \in J\}$$
Define $\script{C}$ as the collection of subsets of $\times_{j \in J} \Omega_j$ of the form $\times_{j \in J} A_j$ for some sets $A_j$ 
that satisfy: (i) $A_j \in \script{F}_j$ for all $j \in J$; (ii)  $A_j=\Omega_j$ for all but at most one index $j \in J$. Define the product sigma algebra on $\times_{j \in J} \Omega_j$, also called the cylindrical sigma algebra, as
$$ \otimes_{j \in J} \script{F}_j = \sigma(\script{C})$$
where $\sigma(\script{C})$ denotes the sigma algebra 
generated by the collection of sets $\script{C}$. 
For a given measurable space $(\Omega, \script{F})$ define $\Omega^J = \times_{j \in J} \Omega$ and define its product sigma algebra as   $\otimes_{j \in J} \script{F}$. 
A special case of interest is $[0,1]^J$ with product sigma algebra $\otimes_{j \in J} \script{B}([0,1])$ 
(this measurable space is a Borel space whenever $J$ is a finite or countably infinite set).

A \emph{probability space} is a triplet $(\Omega, \script{F}, P)$ where $(\Omega, \script{F})$ is a measurable space and $P:\script{F}\rightarrow[0,1]$ is a probability measure.  A \emph{random variable} is a measurable function $X:\Omega\rightarrow \mathbb{R}$. A \emph{random element} is a measurable function $S:\Omega\rightarrow \Omega_S$ where $(\Omega_S, \script{F}_S)$ is some given measurable space.  By $U \sim \script{U}[0,1]$ we mean that  $U:\Omega\rightarrow [0,1]$ is a random variable that is uniformly distributed over $[0,1]$.

\subsection{Standard results}


\begin{lem} \label{lem:bijection} There is an isomorphism $\phi:[0,1]\rightarrow[0,1]^{\mathbb{N}}$.  
[See Theorem A.47 in \cite{breiman-probability} and Chapter 13 of \cite{dudley-probability}.] 
\end{lem}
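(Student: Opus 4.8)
This is a classical fact, and I will indicate both a quick route and a self-contained construction. For the quick route, note that $[0,1]^{\mathbb{N}}$ with the product topology is a Polish space, and for a countable product of separable metrizable spaces the Borel sigma algebra of the product topology coincides with the product sigma algebra $\otimes_{j\in\mathbb{N}}\script{B}([0,1])$; hence $([0,1]^{\mathbb{N}},\otimes_{j\in\mathbb{N}}\script{B}([0,1]))$ is an uncountable standard Borel space, and every uncountable standard Borel space is isomorphic to $([0,1],\script{B}([0,1]))$ by the Borel isomorphism theorem (see Chapter 13 of \cite{dudley-probability} and Theorem A.47 of \cite{breiman-probability}). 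That immediately gives the claim, but it is instructive to carry out the construction by hand.

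For a self-contained proof I would factor $\phi$ through the Cantor space $(\{0,1\}^{\mathbb{N}},\otimes_{n\in\mathbb{N}}\script{B}(\{0,1\}))$ and use a bijection $\mathbb{N}\leftrightarrow\mathbb{N}\times\mathbb{N}$. First, $([0,1],\script{B}([0,1]))$ is isomorphic to $\{0,1\}^{\mathbb{N}}$: the continuous map $\iota_1:\{0,1\}^{\mathbb{N}}\rightarrow[0,1]$ given by $\iota_1(b)=\sum_{n=1}^{\infty}2b_n3^{-n}$ is an injection onto the compact (hence Borel) Cantor set with continuous inverse, while the binary-digit map $\iota_2:[0,1]\rightarrow\{0,1\}^{\mathbb{N}}$, which sends $x<1$ to the unique expansion $x=\sum_n b_n2^{-n}$ with $b_n=0$ for infinitely many $n$ and sends $1$ to $(1,1,1,\ldots)$, is a Borel injection onto a co-countable (hence Borel) subset with Borel inverse on that subset; the Borel Schr\"{o}der--Bernstein theorem then produces an isomorphism $\psi:\{0,1\}^{\mathbb{N}}\rightarrow[0,1]$. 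Second, fix a bijection $\tau:\mathbb{N}\rightarrow\mathbb{N}\times\mathbb{N}$; the coordinate relabeling $T:\{0,1\}^{\mathbb{N}}\rightarrow\{0,1\}^{\mathbb{N}\times\mathbb{N}}$ defined by $(T\omega)_{(i,j)}=\omega_{\tau^{-1}(i,j)}$ is an isomorphism because it and $T^{-1}$ send generating cylinders to generating cylinders, and $(\{0,1\}^{\mathbb{N}\times\mathbb{N}},\otimes_{(i,j)}\script{B}(\{0,1\}))$ is the very same measurable space as $((\{0,1\}^{\mathbb{N}})^{\mathbb{N}},\otimes_{i\in\mathbb{N}}(\otimes_{j\in\mathbb{N}}\script{B}(\{0,1\})))$, since both product sigma algebras are generated by the sets $\{\omega:\omega^{(i_0)}_{j_0}=1\}$ over $i_0,j_0\in\mathbb{N}$. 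Third, applying $\psi$ in each coordinate gives an isomorphism $\Psi:(\{0,1\}^{\mathbb{N}})^{\mathbb{N}}\rightarrow[0,1]^{\mathbb{N}}$, because the preimage of a generating cylinder $\{(x_i)_i:x_{i_0}\in B\}$ with $B\in\script{B}([0,1])$ is the generating cylinder $\{(\omega^{(i)})_i:\omega^{(i_0)}\in\psi^{-1}(B)\}$, and symmetrically for $\Psi^{-1}$. The composition $\phi=\Psi\circ T\circ\psi^{-1}$ is then the claimed isomorphism.

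The only genuine obstacle is the non-injectivity of digit expansions at the dyadic rationals: the self-contained argument must commit to canonical expansions and then verify that the resulting maps, their images, and their inverses-on-image are Borel, after which one crosses from two-sided Borel injections to a genuine isomorphism by the Borel Schr\"{o}der--Bernstein theorem (or, more concretely, by observing that any bijection between two countable Borel sets is automatically a Borel isomorphism, so $\iota_2$ can be stitched to such a bijection on the complementary countable pieces). Everything else — the relabeling $T$, the associativity of the countable product sigma algebra, and the coordinatewise transfer by $\psi$ — reduces to the routine observation that generating cylinders pull back to generating cylinders, and the quick route simply absorbs all of this into the Borel isomorphism theorem.
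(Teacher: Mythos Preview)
Your proof is correct. The paper does not actually prove this lemma: it records it as a standard result and defers to Theorem~A.47 of \cite{breiman-probability} and Chapter~13 of \cite{dudley-probability}, which is precisely your ``quick route'' via the Borel isomorphism theorem. Your self-contained construction through Cantor space, the index bijection $\mathbb{N}\leftrightarrow\mathbb{N}\times\mathbb{N}$, and the Borel Schr\"{o}der--Bernstein theorem is more than the paper offers and is a sound way to make the result explicit; the only delicate point you correctly flag is handling the dyadic rationals when passing between $[0,1]$ and $\{0,1\}^{\mathbb{N}}$.
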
 

\begin{lem} \label{lem:uncountable-isomorphism} If $D$ is an uncountably infinite Borel measurable subset of a Borel space then there is an isomorphism $b:D\rightarrow [0,1]$. [This is a result of Kuratowski in \cite{kuratowski-borel},  see also statement and proof in  Theorem 3.3.13 of \cite{srivastava-borel}.]
\end{lem}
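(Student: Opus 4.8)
The plan is to deduce this from a Borel version of the Cantor--Schr\"oder--Bernstein theorem: it suffices to produce an injective measurable map from $D$ into $[0,1]$ together with an injective measurable map from $[0,1]$ into $D$. First I would normalize the setting. Because $D$ is a Borel measurable subset of a Borel space, that ambient space is isomorphic to $(A,\script{B}(A))$ for some $A\in\script{B}([0,1])$; transporting $D$ across this isomorphism, I may assume $D$ is an uncountable set in $\script{B}([0,1])$, so $(D,\script{B}(D))$ is itself a Borel space. One of the two required injections is then trivial: the inclusion map $D\hookrightarrow[0,1]$ is injective and measurable, its image $D$ is Borel, and its inverse is measurable.

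For the reverse injection I would combine two ingredients. The first is elementary: the binary-expansion map $e:[0,1]\to\{0,1\}^{\mathbb{N}}$, made single-valued by always choosing the expansion that is not eventually equal to $1$, is injective, and it is measurable because the preimage of every basic cylinder of $\{0,1\}^{\mathbb{N}}$ is a finite union of subintervals of $[0,1]$. The second ingredient is the perfect-set phenomenon, and it is the heart of the proof: every uncountable Borel subset of a Borel space contains a homeomorphic copy of the Cantor space $\{0,1\}^{\mathbb{N}}$. I would obtain it by giving $D$ a finer Polish topology that generates $\script{B}(D)$ (possible by the change-of-topology theorem, since $D$ is a Borel subset of $[0,1]$), forming the Cantor--Bendixson decomposition $D=P\cup N$ into a perfect closed set $P$ and a countable scattered set $N$, noting that $P$ is nonempty because $D$ is uncountable, and then running the standard Cantor scheme inside the nonempty perfect Polish space $P$: a tree of nonempty closed subsets of $P$ indexed by the finite binary strings, with the two children of each node disjoint and with diameters shrinking to $0$ along every branch, so that each infinite branch singles out one point of $P$ and the resulting map $\{0,1\}^{\mathbb{N}}\to P\subseteq D$ is a continuous injection. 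Composing $e$ with this embedding yields an injective measurable map $[0,1]\to D$.

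To finish I would invoke the Borel Schr\"oder--Bernstein theorem, with the Lusin--Souslin theorem supplying the needed regularity: an injective Borel map between Borel spaces automatically has Borel image and Borel-measurable inverse, so both maps constructed above are isomorphisms onto Borel subsets. The classical Banach decomposition then partitions $D$ and $[0,1]$ into countably many Borel pieces on which one alternately applies the two maps and their inverses, and splicing these pieces produces a single measurable bijection $b:D\to[0,1]$ with measurable inverse, which is precisely the asserted isomorphism.

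The step I expect to be the main obstacle is the perfect-set ingredient, namely that an uncountable Borel set must contain a Cantor set. It relies on the change-of-topology theorem, so that the Borel structure of $D$ is realized by a Polish topology, together with the Cantor--Bendixson analysis guaranteeing a nonempty perfect kernel; alternatively one could cite the perfect set theorem for analytic sets. By comparison the remaining steps---the trivial inclusion, the measurability of the binary expansion once the dyadic ambiguity is resolved, and the bookkeeping in the Banach decomposition for Schr\"oder--Bernstein---are routine.
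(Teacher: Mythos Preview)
The paper does not supply its own proof of this lemma; it is listed under ``Standard results'' and simply cites Kuratowski and Theorem 3.3.13 of Srivastava. Your sketch is correct and is essentially the standard argument one finds in those references: reduce to an uncountable Borel $D\subseteq[0,1]$, use the inclusion for one injection, use the perfect-set property (via change of topology, Cantor--Bendixson, and a Cantor scheme) to embed $\{0,1\}^{\mathbb{N}}$ into $D$ for the other, and finish with the Borel Schr\"oder--Bernstein theorem backed by Lusin--Souslin. One minor wrinkle: your binary-expansion map $e:[0,1]\to\{0,1\}^{\mathbb{N}}$ as described is undefined at $1$, since $1$ has only the eventually-$1$ expansion; you should either handle that single point separately or, more cleanly, replace $e$ by any Borel injection of $[0,1]$ into $\{0,1\}^{\mathbb{N}}$ (for instance via an isomorphism of $[0,1]$ with a Borel subset of the Cantor set). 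This does not affect the substance of the argument.
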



\begin{lem} \label{lem:standard2} Let $J$ be a nonempty set (possibly uncountably infinite).  Let $(\Omega, \script{F})$ and  
$(\Omega_j, \script{F}_j)$ for $j \in J$ be measurable spaces. 
Then [see similar Lemmas 1.7, 1.8 in \cite{kallenberg}]:

\begin{itemize} 
\item Composition: If $f:\Omega_1\rightarrow \Omega_2$ and $g:\Omega_2\rightarrow \Omega_3$ are measurable functions, the composition $g \circ f$ is measurable.  
 

\item Multidimensional expansion:  Let $Y_j:\Omega\rightarrow \Omega_j$ be measurable functions for each $j \in J$. The function $Y:\Omega\rightarrow  \times_{j \in J} \Omega_j$ given by $Y=(Y_j)_{j \in J}$ is measurable with respect to $(\Omega, \script{F})$ and $(\times_{j \in J} \Omega_j, \otimes_{j \in J} \script{F}_j)$.  
In particular, if $\script{H}_j$ is another sigma algebra on $\Omega$  for each $j \in J$,  and if $Y_j$ is $\script{H}_j$-measurable, then $Y$ is $\sigma(\cup_{j \in J} \script{H}_j)$-measurable.  
\end{itemize} 
\end{lem}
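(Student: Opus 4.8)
The plan is to verify each bullet directly from the definition of measurability given in the Preliminaries, using for the second bullet the standard principle that it suffices to check preimages of a generating collection. For composition, I would fix an arbitrary $A \in \script{F}_3$ and use the set identity $(g\circ f)^{-1}(A) = f^{-1}(g^{-1}(A))$. Since $g$ is measurable, $g^{-1}(A)\in\script{F}_2$; since $f$ is measurable, $f^{-1}$ carries a set of $\script{F}_2$ into $\script{F}_1$. Hence $(g\circ f)^{-1}(A)\in\script{F}_1$ for every $A\in\script{F}_3$, which is exactly the claim.

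For multidimensional expansion, I would first recall the elementary ``good sets'' fact: for any function $Y:\Omega\rightarrow \times_{j\in J}\Omega_j$, the collection $\script{D} = \{A\subseteq \times_{j\in J}\Omega_j : Y^{-1}(A)\in\script{F}\}$ is a sigma algebra on $\times_{j\in J}\Omega_j$, because $Y^{-1}$ commutes with complementation and with countable unions and $Y^{-1}(\times_{j\in J}\Omega_j)=\Omega\in\script{F}$. Consequently, to prove $Y$ is measurable it is enough to show $\script{C}\subseteq\script{D}$, for then $\otimes_{j\in J}\script{F}_j = \sigma(\script{C})\subseteq\script{D}$. So I would fix a generating cylinder $C=\times_{j\in J}A_j\in\script{C}$. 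If $A_j=\Omega_j$ for every $j$, then $Y^{-1}(C)=\Omega\in\script{F}$. Otherwise there is a unique index $j_0\in J$ with $A_{j_0}\in\script{F}_{j_0}$ and $A_j=\Omega_j$ for $j\neq j_0$, and then $Y^{-1}(C)=\{\omega\in\Omega : Y_{j_0}(\omega)\in A_{j_0}\}=Y_{j_0}^{-1}(A_{j_0})$, which lies in $\script{F}$ because $Y_{j_0}$ is measurable. Thus $\script{C}\subseteq\script{D}$ and the first claim follows.

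For the ``in particular'' refinement I would run the identical computation while tracking the smaller sigma algebras: under the hypothesis that each $Y_j$ is $\script{H}_j$-measurable, the same preimage calculation gives $Y^{-1}(C)=Y_{j_0}^{-1}(A_{j_0})\in\script{H}_{j_0}\subseteq\sigma(\cup_{j\in J}\script{H}_j)$ for a one-coordinate cylinder, and $Y^{-1}(\times_{j\in J}\Omega_j)=\Omega\in\sigma(\cup_{j\in J}\script{H}_j)$. Replacing $\script{F}$ by $\sigma(\cup_{j\in J}\script{H}_j)$ in the definition of $\script{D}$ again yields a sigma algebra, and it contains $\script{C}$, hence contains $\sigma(\script{C})=\otimes_{j\in J}\script{F}_j$; so $Y$ is $\sigma(\cup_{j\in J}\script{H}_j)$-measurable. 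There is no genuine obstacle in this lemma; the only point meriting a remark is that $J$ may be uncountable, but this is harmless because the generating cylinders in $\script{C}$ constrain only a single coordinate, so each generator's preimage reduces to a preimage under one $Y_{j_0}$, and the ``good sets'' collection $\script{D}$ handles the passage from $\script{C}$ to $\sigma(\script{C})$ irrespective of the cardinality of $J$.
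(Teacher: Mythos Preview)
Your proposal is correct and follows the standard argument: the set-identity $(g\circ f)^{-1}(A)=f^{-1}(g^{-1}(A))$ for composition, and the ``good sets'' principle applied to the one-coordinate generating cylinders $\script{C}$ for the multidimensional expansion (with the refinement obtained by replacing $\script{F}$ with $\sigma(\cup_{j\in J}\script{H}_j)$). The paper does not give its own proof of this lemma; it simply cites Lemmas~1.7 and~1.8 of Kallenberg, so your write-up is in fact more detailed than what the paper provides, and matches the approach one finds in that reference.
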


\section{Representation of Borel measurable functions} 

Throughout this section assume:  $(\Omega, \script{F})$ is a measurable space;   $J$ is a nonempty set (possibly uncountably infinite); 
$\script{H}_j \subseteq \script{F}$ is a  subsigma algebra on $\Omega$ for each $j \in J$.

\begin{prop} \label{thm:structure} Define $\script{C}$ as the set of functions $X:\Omega\rightarrow [0,1]$ of the form $X = h(\vec{Y})$
where $h:[0,1]^J\rightarrow [0,1]$ is measurable, $\vec{Y}=(Y_j)_{j\in J}$,  
and $Y_j:\Omega\rightarrow [0,1]$ is  $\script{H}_j$-measurable for each $j \in J$.  Define $\script{Z}$ as the following collection of subsets of  $\Omega$: 
$$ \script{Z} = \{X^{-1}(B) \subseteq \Omega : B \in \script{B}([0,1]), X \in \script{C}\}$$
Then 

a) $\script{Z}$ is a sigma algebra on $\Omega$.  

b) $\sigma(\cup_{j\in J}\script{H}_j)= \script{Z}$. 

c)  $X:\Omega\rightarrow [0,1]$ is $\sigma(\cup_{j\in J}\script{H}_j)$-measurable if and only if $X \in \script{C}$. 

\end{prop}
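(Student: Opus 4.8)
The plan is to prove (a), (b), (c) in that order, since (c) follows quickly once (a) and (b) are in hand. Throughout I will lean on two structural facts from the preliminaries: Lemma~\ref{lem:bijection}, which gives an isomorphism $\phi:[0,1]\to[0,1]^{\mathbb{N}}$, and the multidimensional expansion part of Lemma~\ref{lem:standard2}, which says that a tuple of $\script{H}_j$-measurable functions is $\sigma(\cup_j \script{H}_j)$-measurable.

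\emph{Part (a).} To show $\script{Z}$ is a sigma algebra, the natural move is first to recognize $\script{Z}$ as the union over $X \in \script{C}$ of the sub-sigma-algebras $\sigma(X) = \{X^{-1}(B) : B \in \script{B}([0,1])\}$, and then argue that this union is already closed under the sigma-algebra operations. Closure under complement is immediate since $\Omega \setminus X^{-1}(B) = X^{-1}([0,1]\setminus B)$ and $X \in \script{C}$. The substantive point is closure under countable unions: given $X^{(1)}, X^{(2)}, \ldots \in \script{C}$ and Borel sets $B^{(1)}, B^{(2)}, \ldots$, I need a single $X \in \script{C}$ and a single Borel set $B$ with $X^{-1}(B) = \bigcup_n (X^{(n)})^{-1}(B^{(n)})$. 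The idea is to bundle the countable family $(X^{(n)})_{n\in\mathbb{N}}$ into one function valued in $[0,1]^{\mathbb{N}}$ and then compress with $\phi^{-1}$ to land back in $[0,1]$. Concretely, write each $X^{(n)} = h^{(n)}(\vec Y^{(n)})$ where $\vec Y^{(n)} = (Y^{(n)}_j)_{j\in J}$; the countably many index sets involved can be merged, and (this is the crucial observation behind the proposition's emphasis on reusing the same $(Y_j)$) for each $j$ one fixed $\script{H}_j$-measurable $Y_j$ can be reused to build all the $Y^{(n)}_j$ via composition with Borel maps $[0,1]\to[0,1]$, or more simply by pairing. Then $X := \phi^{-1}\big((X^{(n)})_{n\in\mathbb{N}}\big)$ is a composition of measurable maps, hence of the form $h(\vec Y)$ for an appropriate measurable $h:[0,1]^J\to[0,1]$, so $X \in \script{C}$, and the union $\bigcup_n (X^{(n)})^{-1}(B^{(n)})$ is the $X$-preimage of $\phi\big(\bigcup_n \pi_n^{-1}(B^{(n)})\big)$ where $\pi_n$ is the $n$-th coordinate projection — a Borel set. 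That gives closure under countable unions; $\Omega = X^{-1}([0,1])$ handles the whole space.

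\emph{Part (b).} For $\supseteq$: take $X \in \script{C}$ and $B \in \script{B}([0,1])$; write $X = h(\vec Y)$, note $\vec Y$ is $\sigma(\cup_j \script{H}_j)$-measurable by Lemma~\ref{lem:standard2}, and $h$ is measurable, so $X$ is $\sigma(\cup_j \script{H}_j)$-measurable, hence $X^{-1}(B) \in \sigma(\cup_j \script{H}_j)$; therefore $\script{Z} \subseteq \sigma(\cup_j\script{H}_j)$. For $\subseteq$: since $\script{Z}$ is a sigma algebra by (a), it suffices to check $\script{H}_j \subseteq \script{Z}$ for every $j$. Given $j$ and $A \in \script{H}_j$, set $Y_j = \mathbf{1}_A$ (the indicator), which is $\script{H}_j$-measurable and $[0,1]$-valued, set $Y_{j'}$ arbitrarily (say constant) for $j' \neq j$, and take $h$ to be the projection onto coordinate $j$; then $X := h(\vec Y) = \mathbf{1}_A \in \script{C}$ and $A = X^{-1}(\{1\}) \in \script{Z}$. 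Hence $\sigma(\cup_j \script{H}_j) \subseteq \script{Z}$, and (b) follows.

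\emph{Part (c).} If $X \in \script{C}$ then, as just argued, $X$ is $\sigma(\cup_j\script{H}_j)$-measurable. Conversely, if $X:\Omega\to[0,1]$ is $\sigma(\cup_j\script{H}_j)$-measurable, then for every $B \in \script{B}([0,1])$ we have $X^{-1}(B) \in \sigma(\cup_j\script{H}_j) = \script{Z}$. It remains to upgrade ``every level set of $X$ lies in $\script{Z}$'' to ``$X$ itself is in $\script{C}$''; this is a standard approximation argument: approximate $X$ from below by simple functions $X_n$ built from dyadic level sets $X^{-1}([a,b)) \in \script{Z}$, each $X_n$ being a finite combination of functions in $\script{C}$ (using that $\script{C}$ is closed under the relevant operations — which the bundling-via-$\phi$ trick from part (a) again supplies — or by directly noting each indicator of a $\script{Z}$-set is in $\script{C}$ and handling the finite arithmetic), and then pass to the limit $X = \lim_n X_n$, again representing the countable family $(X_n)$ by one $[0,1]^{\mathbb{N}}$-valued function and composing. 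I expect the \textbf{main obstacle} to be exactly this bookkeeping of ``same $(Y_j)$'' across countably many constituent functions: one must be careful that merging the index sets and reusing each $Y_j$ does not secretly require uncountably much data at any single stage, and that the resulting $h$ genuinely depends on only countably many coordinates so that product-sigma-algebra measurability of $h$ is legitimate. The key enabling device throughout is Lemma~\ref{lem:bijection}, which lets a countable tuple of $[0,1]$-valued functions be losslessly repackaged as a single $[0,1]$-valued function.
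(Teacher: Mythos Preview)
Your proposal is correct and follows essentially the same route as the paper: the bundling-via-$\phi^{-1}$ trick to merge countably many $X^{(n)}$'s (and, crucially, countably many $Y_j^{(n)}$'s for each fixed $j$) into a single $\script{H}_j$-measurable $W_j$ is exactly the paper's argument for (a), part (b) is handled identically via indicator functions and projections, and part (c) is the same simple-function approximation followed by another application of the bundling trick over the countable family of approximants. One small slip: in your countable-union step the set should be the $X$-preimage of $\phi^{-1}\big(\bigcup_n \pi_n^{-1}(B^{(n)})\big)$, not $\phi(\cdots)$, since $\phi$ goes $[0,1]\to[0,1]^{\mathbb{N}}$.
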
 

\begin{proof} (Part (a) of Proposition \ref{thm:structure}) 
We show  $\script{Z}$ satisfies the three properties of a sigma algebra on $\Omega$: 

\begin{enumerate} 
\item To show $\Omega \in \script{Z}$, define
the measurable functions $h=0$,  $Y_j=0$ for all $j \in J$, and  
$X = h(\vec{Y})=0 \in \script{C}$.  Define $B = [0,1] \in \script{B}([0,1])$. Then
$\Omega =  X^{-1}(B) \in \script{Z}$.

\item Fix $A \in \script{Z}$. We want to show $A^c \in \script{Z}$.  Since $A \in \script{Z}$ there exists $X\in \script{C}$ and $B \in \script{B}([0,1])$ such that $A = X^{-1}(B)$. Then 
$A^c = X^{-1}(B^c)  \in \script{Z}$. 

\item Let $\{A_n\}_{n=1}^{\infty}$ be an infinite sequence of sets in $\script{Z}$. We want to show $\cup_{n=1}^{\infty} A_n \in \script{Z}$.  
It suffices to show $\cap_{n=1}^{\infty} A_n^c \in \script{Z}$. For each positive integer $n$ there exists $X_n\in \script{C}$ and $B_n\in \script{B}([0,1])$ such that 
$A_n = X_n^{-1}(B_n)$ 
and so 
$A_n^c = X_n^{-1}(B_n^c)$. 
Let $\phi:[0,1]\rightarrow[0,1]^{\mathbb{N}}$ be an isomorphism (recall Lemma \ref{lem:bijection}). Define 
\begin{align}
X&=\phi^{-1}\left((X_n)_{n=1}^{\infty}\right) \label{eq:X-proof1} \\
B &= \phi^{-1}\left(\times_{n=1}^{\infty} B_n^c  \right) \nonumber
\end{align}
Since $\phi^{-1}$ maps measurable sets to measurable sets
we have $B \in  \script{B}([0,1])$. Then: 
\begin{align*}
\cap_{n=1}^{\infty} A_n^c &=\left\{\omega \in \Omega: X_n(\omega) \in B_n^c \quad \forall n \in \mathbb{N}\right\} \\
&= \left\{\omega \in \Omega : \phi^{-1}\left((X_n(\omega))_{n=1}^{\infty}\right) \in \phi^{-1}\left(\times_{n=1}^{\infty} B_n^c  \right)\right\}\\
&= X^{-1}(B)
\end{align*}
Considering the structure of set $\script{Z}$, 
it remains to show that $X \in \script{C}$.  Fix $n \in \mathbb{N}$. Since $X_n \in \script{C}$ we have 
\begin{equation} \label{eq:Wn-structure}
 X_n = h^{(n)}(\vec{Y}^{(n)}) 
 \end{equation} 
for some measurable function $h^{(n)}:[0,1]^J\rightarrow [0,1]$ and some 
$\vec{Y}^{(n)}=(Y^{(n)}_{j})_{j \in J}$
such that $Y^{(n)}_j:\Omega\rightarrow [0,1]$ is $\script{H}_j$-measurable for all $j \in J$. 
For each $j \in J$ define $W_j:\Omega\rightarrow [0,1]$ by 
\begin{equation} \label{eq:W-proof1}
W_j = \phi^{-1}(Y_{j}^{(1)}, Y_{j}^{(2)}, Y_{j}^{(3)}, \ldots)
\end{equation} 
Note that $W_j$ is a composition of the measurable function $\phi^{-1}:[0,1]^{\mathbb{N}}\rightarrow [0,1]$
with the $\script{H}_j$-measurable function $Z:\Omega\rightarrow [0,1]^{\mathbb{N}}$ given 
by $Z(\omega) = (Y_{j}^{(1)}(\omega), Y_{j}^{(2)}(\omega), Y_j^{(3)}(\omega), \ldots)$ and hence $W_j$ is  
itself $\script{H}_j$-measurable (recall Lemma \ref{lem:standard2}).  
Write function $\phi$ according to its components
 $\phi=(\phi_1, \phi_2, \phi_3, \ldots)$ 
 and note that each component function $\phi_n:[0,1]\rightarrow[0,1]$ is measurable.   For each $j \in J$ we have from \eqref{eq:W-proof1}
\begin{align*}
(Y_j^{(1)}, Y_j^{(2)}, Y_j^{(3)}, \ldots) &=\phi(W_j) \\
&=(\phi_1(W_j), \phi_2(W_j), \phi_3(W_j), \ldots) 
\end{align*}
and so $Y_{j}^{(n)} = \phi_n(W_j)$ for all $j \in J, n \in \mathbb{N}$, that is, 
$$\vec{Y}^{(n)} = \left(\phi_n(W_j)\right)_{j \in J} $$
Substituting the above equality into  \eqref{eq:Wn-structure} yields
\begin{equation}\label{eq:andso} 
X_n  = h^{(n)}((\phi_n(W_j))_{j \in J})
\end{equation} 
Define the function $\alpha^{(n)}:[0,1]^J\rightarrow [0,1]$ for each $x=(x_j)_{j \in J}$ by 
$$ \alpha^{(n)}(x) = h^{(n)}((\phi_n(x_j))_{j \in J})$$
Define $\vec{W}=(W_j)_{j \in J}$.  Using this and the definition of $\alpha^{(n)}$  in \eqref{eq:andso} gives: 
\begin{equation} \label{eq:Wn}
X_n = \alpha^{(n)}(\vec{W})
\end{equation} 
Define the function  $h:[0,1]^J\rightarrow [0,1]$ by 
$$ h(x) = \phi^{-1}(\alpha^{(1)}(x), \alpha^{(2)}(x), \alpha^{(3)}(x), \ldots) \quad \forall x \in [0,1]^J$$
The functions $\alpha^{(n)}$ and $h$ are formed by compositions and multidimensional expansions of 
measurable functions and so they are themselves measurable (recall Lemma \ref{lem:standard2}).  By definition of $h$ it holds that 
\begin{align*}
h(\vec{W}) &= \phi^{-1}(\alpha^{(1)}(\vec{W}), \alpha^{(2)}(\vec{W}), \alpha^{(3)}(\vec{W}), \ldots)\\
&\overset{(a)}{=} \phi^{-1}(X_1, X_2, X_3, \ldots)\\
&\overset{(b)}{=} X
\end{align*}
where (a) holds by substituting \eqref{eq:Wn}; (b) holds by definition of $X$ in \eqref{eq:X-proof1}. Thus, $X \in \script{C}$. 
\end{enumerate} 
\end{proof} 

\begin{proof} (Part (b) of Proposition \ref{thm:structure}) To show that $\script{Z}\subseteq \sigma(\cup_{j \in J} \script{H}_j)$, fix $A \in \script{Z}$.  By definition of $\script{Z}$, there exists $B \in \script{B}([0,1])$ and $X \in \script{C}$ such that $A = X^{-1}(B)$,  
where $X=h(\vec{Y})$ for some 
measurable function $h:[0,1]^J\rightarrow [0,1]$ and some 
vector-valued function $\vec{Y}=(Y_j)_{j\in J}$ composed of $\script{H}_j$-measurable functions $Y_j:\Omega\rightarrow[0,1]$ for each $j\in J$. Thus
\begin{align} 
A &= X^{-1}(B) \nonumber\\
 &= \{\omega \in \Omega: h(\vec{Y}) \in B\}\label{eq:A-here}
\end{align} 
Lemma \ref{lem:standard2} ensures that $h(\vec{Y})$ is  $\sigma(\cup_{j \in J} \script{H}_j)$-measurable, and so the right-hand-side of \eqref{eq:A-here} is a set in $\sigma(\cup_{j \in J} \script{H}_j)$, which implies the desired conclusion $A \in \sigma(\cup_{j \in J} \script{H}_j)$. 

We now show $\sigma(\cup_{j \in J} \script{H}_j) \subseteq \script{Z}$. 
Fix $m \in J$. Let $A_m$ be a subset of $\Omega$ 
such that $A_{m}\in \script{H}_{m}$.  Define $\vec{Y}=(Y_j)_{j\in J}$ by $Y_j=0$ if $j\neq m$ and  
$$Y_{m}(\omega)= \left\{\begin{array}{cc}
1 & \mbox{ if $\omega \in A_{m}$} \\
0 & \mbox{ else} 
\end{array}\right.$$
It is clear that $Y_j$ is $\script{H}_j$-measurable for all $j \in J$. Define the measurable function $h:[0,1]^J\rightarrow [0,1]$ by $h((x_j)_{j\in J}) = x_{m}$. Define $B = \{1\} \in \script{B}([0,1])$.  Then 
$$ A_m = \{\omega \in \Omega : h(\vec{Y}(\omega))  \in B\}$$
so by definition of $\script{Z}$ we have $A_{m} \in \script{Z}$.  This holds for all $m \in J$ and $A_{m} \in \script{H}_{m}$ so 
$$ \cup_{j \in J} \script{H}_j \subseteq \script{Z}$$
Taking the sigma algebra of both sides gives
$$ \sigma(\cup_{j \in J} \script{H}_j) \subseteq \sigma(\script{Z})$$
Part (a) implies that $\sigma(\script{Z})=\script{Z}$, which completes the proof. 
\end{proof}

\begin{proof} (Part (c) of Proposition \ref{thm:structure}) Suppose $X \in \script{C}$. Then $X=h(\vec{Y})$ for
some measurable $h$ and for $\vec{Y}=(Y_j)_{j\in J}$ with $Y_j:\Omega\rightarrow [0,1]$ being $\script{H}_j$-measurable for all $j \in J$.  Lemma \ref{lem:standard2} implies that $X$ is $\sigma(\cup_{j \in J} \script{H}_j)$-measurable.

Now suppose $X:\Omega\rightarrow [0,1]$ is $\sigma(\cup_{j \in J} \script{H}_j)$-measurable. It is well known that $X$ is the pointwise limit of \emph{simple} functions $X_m$, so that  
\begin{equation}\label{eq:limitXm}
X(\omega)  = \lim_{m\rightarrow\infty} X_m(\omega) \quad \forall \omega \in \Omega
\end{equation} 
where for each positive integer $m$ the function $X_m:\Omega\rightarrow [0,1]$ has the form 
\begin{equation} \label{eq:Xm}
X_m = \sum_{i=1}^{k_m} v_{i,m}1_{\{X \in I_{i,m}\}}
\end{equation} 
where $k_m$ is some positive integer; $I_{1,m}, I_{2,m}, \ldots, I_{k_m,m}$  
are  some disjoint sets in $\script{B}([0,1])$ whose union is $[0,1]$; $1_A$ is an indicator function that is 1 if event $A$ is true and $0$ else; $v_{i,m}$ are some real numbers in $[0,1]$ for each $i \in \{1, \ldots, k_m\}$. 

Since $X$ is $\sigma(\cup_{j \in J} \script{H}_j)$-measurable, we have for each positive integer $m$ and each $i\in \{1, \ldots, k_m\}$: 
$$\{X \in I_{i,m}\} \in \sigma(\cup_{j \in J} \script{H}_j)= \script{Z}$$
where the final equality holds by part (b). It follows by definition of $\script{Z}$ that 
\begin{equation}\label{eq:equality}
\{X \in I_{i,m}\} = \{\omega \in \Omega: X_{i,m}(\omega) \in B_{i,m}\}
\end{equation} 
for some $B_{i,m} \in \script{B}([0,1])$ and some $X_{i,m} \in \script{C}$. Substituting \eqref{eq:equality} into  \eqref{eq:Xm} 
and using \eqref{eq:limitXm} we obtain 
\begin{equation}\label{eq:pluglimit}
 X = \limsup_{m\rightarrow\infty} \sum_{i=1}^{k_m} v_{i,m} 1_{\{X_{i,m} \in B_{i,m}\}}
 \end{equation} 
where we have used the fact that the 
limit exists and so must be equal to the $\limsup$. 

By definition of $\script{C}$, each function $X_{i,m} \in \script{C}$ has the form
\begin{equation} \label{eq:Xim}
X_{i,m} = h^{(i,m)}((Y_j^{(i,m)})_{j\in J})
\end{equation} 
for measurable functions $h^{(i,m)}:[0,1]^J\rightarrow [0,1]$ and some $\script{H}_j$-measurable functions $Y_j^{(i,m)}:\Omega\rightarrow [0,1]$ for $j \in J$. Let $L$ be the (countably infinite) set of all indices $(i,m)$ such that $m\in \mathbb{N}$ and $i \in \{1, \ldots, k_m\}$. Let $\phi:[0,1]\rightarrow[0,1]^L$ be an isomorphism. 
For each $j\in J$ define $W_j = \phi^{-1}((Y_j^{(i,m)})_{(i,m)\in L})$. Since $W_j$ is the composition of the measurable function $\phi^{-1}$ with the multidimensional expansion of $\script{H}_j$-measurable functions, it is itself $\script{H}_j$-measurable (recall Lemma \ref{lem:standard2}).   Define $\phi_{i,m}$ as the $(i,m)$ component of the $\phi$ function for each $(i,m) \in L$. Then from \eqref{eq:Xim}
\begin{align*}
X_{i,m} &= h^{(i,m)}\left((\phi_{i,m}(W_j))_{j\in J}\right)
\end{align*}
 Define $\vec{W}=(W_j)_{j\in J}$. Then  
\begin{equation}\label{eq:plugchug}
(X_{i,m})_{(i,m)\in L} = \alpha(\vec{W})
\end{equation} 
where $\alpha:[0,1]^J\rightarrow [0,1]^L$ is the measurable function defined for $x=(x_j)_{j\in J}$ by component functions $\alpha_{i,m}(x)$ for each $(i,m)\in L$ by 
$$ \alpha_{i,m}(x)= h^{(i,m)}\left((\phi_{i,m}(x_j))_{j\in J}\right)$$

Define the measurable function $g:[0,1]^L\rightarrow [0,1]$ for each $x=(x_{i,m})_{(i,m)\in L}$ by 
$$g(x) = \limsup_{m\rightarrow\infty} \sum_{i=1}^{k_m}v_{i,m}1_{\{x_{i,m} \in B_{i,m}\}}$$
where we observe the limsup is in the set $[0,1]$ because for each $m$, all $v_{i,m}$ values are in  $[0,1]$ and at most one term in the sum is nonzero. 
It follows that 
\begin{align*}
 X &\overset{(a)}{=} g((X_{i,m})_{(i,m)\in L})\\
 &\overset{(b)}{=} g(\alpha(\vec{W}))
 \end{align*}
 where (a) holds by \eqref{eq:pluglimit}; (b) holds by \eqref{eq:plugchug}.  We can now define the measurable function $h:[0,1]^J\rightarrow [0,1]$ by $h(x) = g(\alpha(x))$ and we see that $X=h(\vec{W})$, where $\vec{W}=(W_j)_{j\in J}$ for $W_j:\Omega\rightarrow [0,1]$ being $\script{H}_j$-measurable for all $j \in J$.  It follows that $X$ has the required form for inclusion in the set $\script{C}$. 
\end{proof}

Now fix $K$ as a finite or countably infinite set.  For each $k \in K$ let $(V_k, \script{F}_k)$ be a Borel space. We consider measurable functions $X_k:\Omega \rightarrow V_k$. 

\begin{prop} \label{cor:main}  Fix $J$ as a nonempty set (possibly uncountably infinite). For each $j \in J$, let $\script{H}_j\subseteq \script{F}$ be a sigma algebra on $\Omega$.  Fix functions $X_k:\Omega\rightarrow V_k$ for $k \in K$, where $(V_k, \script{F}_k)$ are given Borel spaces.   For each $k \in K$, fix $J_k\subseteq J$. Then 
\begin{equation} \label{eq:constraints1} 
X_k \mbox { is $\sigma(\cup_{j \in J_k} \script{H}_j)$-measurable} \quad \forall k \in K
\end{equation} 
if and only if for each $k \in K$ we have 
\begin{equation} \label{eq:form}
X_k=h_k((Y_j)_{j \in \tilde{J}_k})
\end{equation} 
where  $h_k:[0,1]\rightarrow V_k$ is some measurable function,    $Y_j:\Omega\rightarrow [0,1]$ are some $\script{H}_j$-measurable functions for each $j \in J$, and $\tilde{J}_k$ is a finite or countably infinite subset of $J_k$ for each $k \in K$.
\end{prop}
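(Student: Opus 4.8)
The plan is to bootstrap from Proposition~\ref{thm:structure}, which already settles the case of a single $[0,1]$-valued function measurable with respect to $\sigma(\cup_{j\in J}\script{H}_j)$. Going from there to the present statement requires three moves: (i) reduce Borel-space-valued $X_k$ to $[0,1]$-valued functions by composing with Borel isomorphisms; (ii) apply Proposition~\ref{thm:structure} separately for each $k\in K$, using the index set $J_k$; (iii) stitch the resulting per-$k$ representations together so that one common family $(Y_j)_{j\in J}$ serves every $k$ and each $h_k$ depends on only countably many coordinates. The ``if'' implication (that \eqref{eq:form} implies \eqref{eq:constraints1}) is short: from $X_k=h_k((Y_j)_{j\in\tilde J_k})$, the map $(Y_j)_{j\in\tilde J_k}:\Omega\to[0,1]^{\tilde J_k}$ is $\sigma(\cup_{j\in\tilde J_k}\script{H}_j)$-measurable by the multidimensional expansion clause of Lemma~\ref{lem:standard2}, hence $\sigma(\cup_{j\in J_k}\script{H}_j)$-measurable since $\tilde J_k\subseteq J_k$ and enlarging a union of sigma algebras only enlarges the sigma algebra it generates; composing with the measurable $h_k$ (Lemma~\ref{lem:standard2}) shows $X_k$ is $\sigma(\cup_{j\in J_k}\script{H}_j)$-measurable.

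For the ``only if'' implication, I first reduce to the case $V_k=[0,1]$ for all $k$. Since each $(V_k,\script{F}_k)$ is a Borel space there is an isomorphism $b_k:V_k\to A_k$ with $A_k\in\script{B}([0,1])$, and $b_k\circ X_k$ is $\sigma(\cup_{j\in J_k}\script{H}_j)$-measurable and $[0,1]$-valued. Any representation $b_k\circ X_k=\bar h_k((Y_j)_{j\in\tilde J_k})$ with $\bar h_k:[0,1]^{\tilde J_k}\to[0,1]$ measurable yields one for the original $X_k$ upon composing with $b_k^{-1}$ on the measurable set $\{\bar h_k\in A_k\}$ (and a fixed element of $V_k$ off it): this composite is measurable, and it equals $X_k$ pointwise because $\bar h_k((Y_j)_j)=b_k\circ X_k\in A_k$ everywhere. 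So assume $V_k=[0,1]$ for all $k$. Fix $k$ with $J_k\neq\emptyset$ (if $J_k=\emptyset$ the hypothesis forces $X_k$ constant and the conclusion holds with $\tilde J_k=\emptyset$). Applying Proposition~\ref{thm:structure}(c) with index set $J_k$ gives $X_k=\hat h_k\big((Y^{(k)}_j)_{j\in J_k}\big)$ for some measurable $\hat h_k:[0,1]^{J_k}\to[0,1]$ and some $\script{H}_j$-measurable $Y^{(k)}_j:\Omega\to[0,1]$, $j\in J_k$.

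It remains to remove two defects: $\hat h_k$ ranges over a possibly uncountable product, and the $Y^{(k)}_j$ depend on $k$. For the first, the collection of sets in $\otimes_{j\in J_k}\script{B}([0,1])$ that are measurable with respect to only countably many coordinates is a sigma algebra containing every generating cylinder, hence equals $\otimes_{j\in J_k}\script{B}([0,1])$; applied to the sets $\hat h_k^{-1}((-\infty,q])$, $q\in\mathbb{Q}$, this produces a countable $\tilde J_k\subseteq J_k$ such that $\hat h_k$ is measurable with respect to the coordinates in $\tilde J_k$, and the Doob-Dynkin lemma (applied to the projection $\pi_{\tilde J_k}:[0,1]^{J_k}\to[0,1]^{\tilde J_k}$) factors $\hat h_k=g_k\circ\pi_{\tilde J_k}$ with $g_k:[0,1]^{\tilde J_k}\to[0,1]$ measurable; thus $X_k=g_k\big((Y^{(k)}_j)_{j\in\tilde J_k}\big)$. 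For the second, for each $j\in J$ let $K_j=\{k\in K:j\in\tilde J_k\}$, countable because $K$ is. If $K_j=\emptyset$ set $Y_j=0$; otherwise pick an isomorphism $\psi_j:[0,1]^{K_j}\to[0,1]$ (Lemma~\ref{lem:bijection}, after padding $K_j$ with dummy indices carrying the value $0$ to make it countably infinite if it is finite) and set $Y_j=\psi_j\big((Y^{(k)}_j)_{k\in K_j}\big)$. Then $Y_j$ is $\script{H}_j$-measurable, since the multidimensional expansion of the $\script{H}_j$-measurable functions $(Y^{(k)}_j)_{k\in K_j}$ into $[0,1]^{K_j}$ is $\script{H}_j$-measurable (Lemma~\ref{lem:standard2}) and composition with $\psi_j$ preserves this; and for $k\in K_j$ we recover $Y^{(k)}_j=\chi_{j,k}(Y_j)$, where $\chi_{j,k}:[0,1]\to[0,1]$ is the measurable $k$-th coordinate of $\psi_j^{-1}$. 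Since $j\in\tilde J_k$ forces $k\in K_j$, substitution gives $X_k=g_k\big((\chi_{j,k}(Y_j))_{j\in\tilde J_k}\big)=h_k\big((Y_j)_{j\in\tilde J_k}\big)$ with $h_k:[0,1]^{\tilde J_k}\to[0,1]$, $h_k((x_j)_{j\in\tilde J_k})=g_k((\chi_{j,k}(x_j))_{j\in\tilde J_k})$, measurable as a composition; undoing the reduction of the previous paragraph puts this in the form \eqref{eq:form}.

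The genuinely delicate step is the stitching above: one must cut each $\hat h_k$ down to countably many coordinates \emph{before} merging, and exploit the countability of $K$ so that each $K_j$ is countable, so that the bundling isomorphism $\psi_j$ exists and a single $\script{H}_j$-measurable function $Y_j$ can encode all of the $k$-dependent functions $Y^{(k)}_j$ at once. Every other step is routine manipulation of compositions and multidimensional expansions justified by Lemma~\ref{lem:standard2}, together with the standard isomorphisms of Lemmas~\ref{lem:bijection} and~\ref{lem:uncountable-isomorphism}.
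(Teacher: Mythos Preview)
Your proof is correct and follows essentially the same strategy as the paper: reduce to $[0,1]$-valued functions via Borel isomorphisms, apply Proposition~\ref{thm:structure} separately for each $k$, cut down to countable coordinate dependence, and then merge the per-$k$ auxiliary functions $Y_j^{(k)}$ into a single $\script{H}_j$-measurable $Y_j$ via an isomorphism between $[0,1]$ and a countable power of $[0,1]$. The only cosmetic difference is that the paper uses one global isomorphism $\phi:[0,1]\to[0,1]^K$ (bundling over all of $K$ for every $j$), whereas you bundle over the possibly smaller sets $K_j$ with $j$-dependent isomorphisms $\psi_j$; both achieve the same end.
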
 

\begin{proof} 
For the reverse direction, it is clear from Lemma \ref{lem:standard2} 
that if $(X_k)_{k\in K}$ has the given 
form $X_k=h_k((Y_j)_{j \in \tilde{J}_k})$   then \eqref{eq:constraints1} holds.
To prove the forward direction, suppose that \eqref{eq:constraints1} holds. Fix $k \in K$. 
Since $(V_k, \script{F}_k)$ is a Borel space, 
there is a set $D_k\in \script{B}([0,1])$ and an isomorphism $b_k:V_k\rightarrow D_k$. 
Define $Z_k:\Omega\rightarrow [0,1]$ by $Z_k=b_k(X_k)$. Lemma \ref{lem:standard2} implies that $Z_k$ is $\sigma(\cup_{j \in J_k}\script{H}_j)$-measurable. By Proposition \ref{thm:structure} we have 
$Z_k = g^{(k)}\left((Y_j^{(k)})_{j \in J_k}\right)$
with $Y_j^{(k)}:\Omega\rightarrow [0,1]$ being $\script{H}_j$-measurable for all $j \in J$, and $g^{(k)}:[0,1]^{J_k}\rightarrow [0,1]$ is measurable.  For every such real-valued measurable function $g^{(k)}$, there is a countable subset $\tilde{J}_k\subseteq J_k$ for which the function only depends on the variables $y_j$ for $j \in \tilde{J}_k$ [see, for example,  related Exercise 1.1.22 in \cite{dembo-notes} and Section 3.13d in \cite{williams-martingale}].  Thus, we modify the $g^{(k)}$ functions to $f^{(k)}:[0,1]^{\tilde{J}_k}\rightarrow [0,1]$ measurable for which 
\begin{equation} \label{eq:Z-k-here}
Z_k = f^{(k)}\left((Y_j^{(k)})_{j \in \tilde{J}_k}\right)
\end{equation} 
Let $\phi:[0,1]\rightarrow[0,1]^K$ be an isomorphism. For each $j \in J$ define 
$$ Y_j = \phi^{-1}((Y_j^{(k)})_{k\in K})$$
Since each function $Y_j^{(k)}$ is $\script{H}_j$-measurable, $Y_j$ is also $\script{H}_j$-measurable (recall Lemma \ref{lem:standard2}).   For each $k \in K$ let $\phi_k$ denote the $k$th component of $\phi$.  Then 
$$ \phi_k(Y_j) = Y_j^{(k)}$$
which gives by substitution into \eqref{eq:Z-k-here}: 
\begin{align}
 Z_k &= f^{(k)}\left( (\phi_k(Y_j))_{j \in \tilde{J}_k} \right) \nonumber \\
 &=\alpha^{(k)}\left((Y_j)_{j \in \tilde{J}_k}\right) \label{eq:Z-k-here2} 
 \end{align}
 where  $\alpha^{(k)}:[0,1]^{\tilde{J}_k}\rightarrow [0,1]$ is defined as the measurable function for each $x=(x_j)_{j\in \tilde{J}_k}$ by 
 $$ \alpha^{(k)}(x) = f^{(k)}\left((\phi_k(x_j))_{j\in \tilde{J}_k}\right)$$
Substituting the definition $Z_k=b_k(X_k)$ into the left-hand-side of \eqref{eq:Z-k-here2} gives 
$$b_k(X_k) = \alpha^{(k)}\left((Y_j)_{j \in \tilde{J}_k}\right)$$
Taking $b_k^{-1}(\cdot)$ of both sides  gives 
$$ X_k = b_k^{-1}\left(\alpha^{(k)}\left((Y_j)_{j \in \tilde{J}_k}\right)\right) $$
This holds for all $k \in K$ and   has  the desired form $X_k=h_k\left((Y_j)_{j \in \tilde{J}_k}\right)$ when the measurable function  $h_k:[0,1]^{\tilde{J}_k}\rightarrow V_k$ is defined  by $h_k(x) = b_k^{-1}(\alpha^{(k)}(x))$ for all $x \in [0,1]^{\tilde{J}_k}$.
\end{proof} 

\begin{cor}  \label{cor:function} Let $(V, \script{F}_V)$ be a Borel space. Let $J$ be a nonempty set 
and let $(\Omega_j, \script{F}_j)$ be measurable spaces for each $j \in J$. If  $f:\times_{j \in J} \Omega_j\rightarrow V$ is a measurable function with respect to  $(\times_{j \in J} \Omega_j, \otimes_{j \in J} \script{F}_j)$ and $(V, \script{F}_V)$ and $\omega = (\omega_j)_{j \in J}$ then 
$$ f(\omega) = h((\theta_j(\omega_j))_{j \in \tilde{J}}) \quad \forall \omega \in \times_{j \in J} \Omega_j$$
where $\tilde{J}\subseteq J$ is a finite or countably infinite set, $\theta_j:\Omega_j\rightarrow [0,1]$ is a measurable function for each $j \in \tilde{J}$, and 
$h:\times_{j \in \tilde{J}}\Omega_j \rightarrow V$ is some measurable function with respect to  $(\times_{j \in \tilde{J}} \Omega_j, \otimes_{j \in J} \script{F}_j)$ and $(V, \script{F}_V)$.
\end{cor}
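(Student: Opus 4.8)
The plan is to realize the statement as a direct application of Proposition \ref{cor:main} on the product space itself, with the coordinate projections playing the role of the sub-sigma-algebras. Set $\Omega = \times_{j\in J}\Omega_j$ and $\script{F} = \otimes_{j\in J}\script{F}_j$, and for each $j\in J$ let $\pi_j:\Omega\to\Omega_j$ be the coordinate map $\pi_j((\omega_i)_{i\in J}) = \omega_j$. Each $\pi_j$ is measurable with respect to $(\Omega,\script{F})$ and $(\Omega_j,\script{F}_j)$, so $\script{H}_j = \sigma(\pi_j)$ is a sub-sigma-algebra of $\script{F}$. The first observation is that $\sigma(\cup_{j\in J}\script{H}_j) = \script{F}$: by the definition of the cylindrical sigma algebra given in the Preliminaries, $\otimes_{j\in J}\script{F}_j$ is generated by the sets $\pi_j^{-1}(A_j)$ with $j\in J$ and $A_j\in\script{F}_j$, and these are exactly the generators of $\cup_{j\in J}\sigma(\pi_j)$.

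Next I would apply Proposition \ref{cor:main} with index set $K=\{1\}$, Borel space $V_1=V$, function $X_1 = f$, and $J_1 = J$. Since $f$ is $\script{F}$-measurable and $\script{F} = \sigma(\cup_{j\in J_1}\script{H}_j)$, the hypothesis \eqref{eq:constraints1} holds, so \eqref{eq:form} produces a finite or countably infinite set $\tilde{J}\subseteq J$, functions $Y_j:\Omega\to[0,1]$ that are $\script{H}_j$-measurable for each $j\in J$, and a measurable function $h:[0,1]^{\tilde J}\to V$ with $f = h((Y_j)_{j\in\tilde J})$.

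It then remains to strip each $Y_j$ down to a function of its own coordinate. For $j\in\tilde J$ the function $Y_j$ is $\sigma(\pi_j)$-measurable and real-valued, so the Doob-Dynkin lemma gives a measurable $g_j:\Omega_j\to\mathbb{R}$ with $Y_j = g_j\circ\pi_j$; composing $g_j$ with a fixed measurable retraction of $\mathbb{R}$ onto $[0,1]$ — which leaves the values unchanged because $Y_j$ already takes values in $[0,1]$ — yields a measurable $\theta_j:\Omega_j\to[0,1]$ with $Y_j(\omega) = \theta_j(\omega_j)$ for all $\omega\in\Omega$. Substituting into $f = h((Y_j)_{j\in\tilde J})$ gives $f(\omega) = h((\theta_j(\omega_j))_{j\in\tilde J})$ for all $\omega\in\times_{j\in J}\Omega_j$, which is the claimed representation.

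I do not expect a genuine obstacle here: the corollary is essentially a repackaging of Proposition \ref{cor:main}, and the only steps needing a line of justification are the identity $\otimes_{j\in J}\script{F}_j = \sigma(\cup_{j\in J}\sigma(\pi_j))$ (immediate from the definition of the product sigma algebra) and the harmless truncation that forces the Doob-Dynkin factor functions into $[0,1]$.
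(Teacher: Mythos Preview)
Your proposal is correct and follows essentially the same route as the paper: define coordinate projections on the product space, set $\script{H}_j=\sigma(\pi_j)$, apply Proposition~\ref{cor:main} with $K=\{1\}$ to obtain $f=h((Y_j)_{j\in\tilde J})$, and then invoke Doob--Dynkin to factor each $Y_j$ through $\omega_j$. Your version is in fact slightly more careful than the paper's, which omits the justification that $\otimes_{j\in J}\script{F}_j=\sigma(\cup_{j\in J}\sigma(\pi_j))$ and does not comment on forcing the Doob--Dynkin factor into $[0,1]$.
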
 

\begin{proof} Define $S_j:\Omega\rightarrow \Omega_j$ by $S_j(\omega) = \omega_j$ for $j \in J$. Define $\script{H}_j=\sigma(S_j)$.  Then $f$ is $\sigma(\cup_{j\in J} \script{H}_j)$-measurable and Proposition \ref{cor:main} implies $f=h((Y_j)_{j \in \tilde{J}})$ for a countable subset $\tilde{J}\subseteq J$,  a measurable function $h$,  and for $Y_j$ being $\sigma(S_j)$ measurable for each $j \in \tilde{J}$.  The Doob-Dynkin lemma implies $Y_j=\theta_j(S_j)=\theta_j(\omega_j)$ for $j \in \tilde{J}$. 
\end{proof}

\section{Stochastic control} \label{section:control}

Throughout this section we fix a probability triplet $(\Omega, \script{F}, P)$. Let 
$(\Omega_S, \script{F}_S)$ be a measurable space and let  $(\Omega_X, \script{F}_X)$ be a Borel space.  Consider a discrete time system that evolves over time slots $k \in \{1, 2, 3, \ldots\}$.  Let $(S_k)_{k=1}^{\infty}$ be a sequence of random elements of the form $S_k:\Omega\rightarrow \Omega_S$. The value $S_k$  represents a system characteristic or state at time $k$. 
Let $\script{G}\subseteq \script{F}$ be a sigma algebra on $\Omega$ that is used as a source of randomness to facilitate stochastic decisions. Let $(X_k)_{k=1}^{\infty}$ be a sequence of random elements of the form $X_k:\Omega\rightarrow \Omega_X$. Each $X_k$ represents a decision that is made at time $k$ based on observing $S_1, \ldots, S_k$.   Assume decisions for each step $k$ are made to ensure  
\begin{align}
&X_k \mbox{ is $\sigma(\sigma(S_1)\cup \cdots \cup \sigma(S_k) \cup \script{G})$-measurable} \label{eq:control1} \\
&X_k \in C(S_k) \quad  \label{eq:control2}
\end{align} 
where $C:\Omega_S\rightarrow Pow(\Omega_X)$ is a set-valued map  and $Pow(\Omega_X)$ is the set of all subsets of $\Omega_X$. Constraint \eqref{eq:control1} is the \emph{causal and measurable} constraint. Constraint \eqref{eq:control2} is a system constraint that restricts the $X_k$ value to a set that depends on  $S_k$. Values of $S_{k+1}$ are determined by some probability rule on the system and are possibly dependent on $S_1, \ldots, S_k$ and $X_1, \ldots, X_k$.  A special case is when $S_k$ represents the state of a discrete time Markov chain and there is some transition probability kernel that specifies the conditional distribution of $S_{k+1}$ given $S_k$ and $X_k$. 

Decisions $X_k$ can be vector valued with components that represent power expenditures, costs, or rewards incurred or earned by different parts of the system at time $k$.   We want to characterize all decision elements $(X_k)_{k=1}^{\infty}$ that satisfy \eqref{eq:control1}-\eqref{eq:control2}. Proposition \ref{cor:main} ensures that if \eqref{eq:control1}-\eqref{eq:control2} hold then  
$$X_k = h_k(Y_1, \ldots, Y_k, R) \in C(S_k)  \quad \forall k \in \mathbb{N}$$
for  some Borel measurable functions $h_k:\Omega_S^k\times [0,1]\rightarrow \Omega_X$, some $\script{G}$-measurable random variable $R:\Omega\rightarrow [0,1]$, 
and some  random variables $Y_k =\theta_k(S_k)$ for some measurable functions $\theta_k:\Omega_S\rightarrow [0,1]$. It immediately follows that 
\begin{equation} \label{eq:Xg} 
X_k = g_k(S_1, \ldots, S_k, R) \in C(S_k) \quad \forall k \in \mathbb{N}
\end{equation} 
where $g_k:\Omega_S^k\times [0,1]\rightarrow \Omega_X$ is defined 
$$g_k(s_1, \ldots, s_k, r) = h_k(\theta_1(s_1), \ldots, \theta_k(s_k), r)$$
Consider the following additional assumptions: 

\begin{assumption} \label{assumption:1} There is a deterministic measurable choice function $\psi:\Omega_S\rightarrow \Omega_X$ such that $\psi(s) \in C(s)$ for all $s \in \Omega_S$.
\end{assumption} 

\begin{assumption}  \label{assumption:2}  
$\{(s,x)  \in \Omega_S \times \Omega_X: x \in C(s)\} \in \script{F}_S\otimes \script{F}_X$
\end{assumption} 

Both assumptions hold if $\Omega_S$ is a finite or countably infinite set, $\script{F}_S=Pow(\Omega_S)$, and  $C(s)$ is a nonempty subset of $\script{F}_X$ for each $s \in \Omega_S$.  Assumptions \ref{assumption:1}-\ref{assumption:2} also hold in the case when a vector of resources
$P_k \in \mathbb{R}^a$ (such as power allocations) is chosen on each slot $k\in \mathbb{N}$ 
and affects a vector of rewards $R_k\in \mathbb{R}^b$ (such as transmission rates over links of a communication system) via $R_k=f(S_k,P_k)$, where $a, b$ are given positive integers, 
$\Omega_P$ is a given Borel measurable subset of $\mathbb{R}^a$,   $f:\Omega_S\times \Omega_P\rightarrow \mathbb{R}^b$ is a given measurable function, and 
\begin{equation} \label{eq:example-12} 
C(s) = \{(p, f(s,p)) \in \mathbb{R}^{a+b} : p \in \Omega_P\} \quad \forall s \in \Omega_S
\end{equation} 
Indeed, Assumption \ref{assumption:1} holds for  \eqref{eq:example-12} because 
$\psi(s)=(0, f(s,0))$ is a deterministic measurable choice function;  
Assumption \ref{assumption:2} can be seen to hold for  \eqref{eq:example-12} by defining 
the measurable function  
$g:\Omega_S\times \Omega_P \times \mathbb{R}^b\rightarrow \mathbb{R}^b$ by $g(s,p,r) = r-f(s,p)$
and observing that $g^{-1}(\{0\})$ is measurable. 
More
general sufficient conditions for existence of a deterministic measurable choice function are given in the \emph{selection theorems} of Blackwell and Ryll-Nardzewski \cite{blackwell-images},  Kuratowski and Ryll-Nardzewski \cite{kuratawsi-selection},  Dubins and Savage \cite{savage-gamble-selection}, Maitra \cite{maitra-dp}, Aumann  \cite{aumann-measurable-choice}, Sch{\"a}l \cite{schal-selection}, Von Neumann \cite{von-neumann-selection},  Srivastava \cite{srivastava-borel}, and Cascales, Kadets, Rodr{\'i}guez \cite{banach-selection}.  

\begin{lem} Suppose Assumptions \ref{assumption:1} and \ref{assumption:2} hold.  The sequence $(X_k)_{k=1}^{\infty}$ of Borel measurable random elements of the form $X_k:\Omega\rightarrow \Omega_X$ satisfies \eqref{eq:control1}-\eqref{eq:control2} if and only if  there are  measurable functions  
$v_k:\Omega_S^k\times [0,1]\rightarrow \Omega_X$ for each $k \in \mathbb{N}$ such that 
\begin{equation} \label{eq:desired-v}
 v_k(s_1, \ldots, s_k, r) \in C(s_k) \quad  \forall (s_1, \ldots, s_k, r) \in \Omega_S^k\times [0,1]
 \end{equation} 
and a $\script{G}$-measurable random variable $R:\Omega\rightarrow [0,1]$ 
such that 
\begin{equation} \label{eq:Xv} 
X_k=v_k(S_1, \ldots, S_k, R) \quad \forall k \in \mathbb{N}
\end{equation} 
\end{lem}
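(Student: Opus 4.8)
The plan is to prove both directions, with the forward direction being the substantive one. The reverse direction is routine: if $X_k = v_k(S_1,\ldots,S_k,R)$ with each $v_k$ measurable and $R$ being $\script{G}$-measurable, then Lemma \ref{lem:standard2} (composition and multidimensional expansion) immediately gives that $X_k$ is $\sigma(\sigma(S_1)\cup\cdots\cup\sigma(S_k)\cup\script{G})$-measurable, so \eqref{eq:control1} holds; and \eqref{eq:desired-v} directly gives \eqref{eq:control2}. So I would dispose of this in one or two sentences.

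For the forward direction, I would start from the representation already derived in the text just above the lemma: under \eqref{eq:control1}--\eqref{eq:control2}, equation \eqref{eq:Xg} gives measurable functions $g_k:\Omega_S^k\times[0,1]\to\Omega_X$ and a $\script{G}$-measurable $R:\Omega\to[0,1]$ with $X_k = g_k(S_1,\ldots,S_k,R)$ and $g_k(S_1(\omega),\ldots,S_k(\omega),R(\omega))\in C(S_k(\omega))$ for all $\omega\in\Omega$. The gap between this and the desired conclusion is exactly that \eqref{eq:desired-v} demands the containment $v_k(s_1,\ldots,s_k,r)\in C(s_k)$ hold for \emph{every} point of $\Omega_S^k\times[0,1]$, not merely on the range of $(S_1,\ldots,S_k,R)$. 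So the task is to modify $g_k$ off that range into a function $v_k$ that still agrees with $X_k$ on $\Omega$ but is everywhere a valid selection.

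The key construction: let $\psi:\Omega_S\to\Omega_X$ be the deterministic measurable choice function from Assumption \ref{assumption:1}, and let $G = \{(s,x)\in\Omega_S\times\Omega_X : x\in C(s)\}\in\script{F}_S\otimes\script{F}_X$ from Assumption \ref{assumption:2}. Define
\[
v_k(s_1,\ldots,s_k,r) = \begin{cases} g_k(s_1,\ldots,s_k,r) & \text{if } (s_k, g_k(s_1,\ldots,s_k,r))\in G,\\ \psi(s_k) & \text{otherwise.}\end{cases}
\]
Measurability of $v_k$ follows because the set $\{(s_1,\ldots,s_k,r): (s_k,g_k(s_1,\ldots,s_k,r))\in G\}$ is the preimage of $G\in\script{F}_S\otimes\script{F}_X$ under the measurable map $(s_1,\ldots,s_k,r)\mapsto(s_k, g_k(s_1,\ldots,s_k,r))$ (here using Lemma \ref{lem:standard2}, and that $\Omega_X$ being a Borel space makes the relevant product-measurability work); on this set $v_k$ equals the measurable function $g_k$, and on its complement $v_k$ equals $s_k\mapsto\psi(s_k)$, which is measurable by Assumption \ref{assumption:1} composed with the coordinate projection. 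By construction $v_k(s_1,\ldots,s_k,r)\in C(s_k)$ everywhere: on the first branch because $(s_k,g_k(\cdots))\in G$ means exactly $g_k(\cdots)\in C(s_k)$, and on the second branch because $\psi(s_k)\in C(s_k)$ by Assumption \ref{assumption:1}. That gives \eqref{eq:desired-v}. Finally, for each $\omega\in\Omega$ we have $(S_k(\omega), g_k(S_1(\omega),\ldots,S_k(\omega),R(\omega))) = (S_k(\omega), X_k(\omega))\in G$ by \eqref{eq:Xg}, so $v_k$ takes the first branch at $(S_1(\omega),\ldots,S_k(\omega),R(\omega))$ and hence $v_k(S_1,\ldots,S_k,R) = g_k(S_1,\ldots,S_k,R) = X_k$, which is \eqref{eq:Xv}.

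The main obstacle is the measurability of the "good set" $\{(s_1,\ldots,s_k,r): (s_k,g_k(s_1,\ldots,s_k,r))\in G\}$ and of $v_k$ itself: this is precisely where Assumption \ref{assumption:2} is used (to know $G$ is product-measurable so its preimage is measurable) and where one needs $\Omega_X$ to be a Borel space so that the map into $\Omega_S\times\Omega_X$ is jointly measurable and gluing two measurable functions along a measurable set yields a measurable function. Assumption \ref{assumption:1} is what makes the "repair" value $\psi(s_k)$ both available and measurable. Once these two points are nailed down the rest is bookkeeping; I would also remark that the derivation of \eqref{eq:Xg} itself already invokes Proposition \ref{cor:main} and the Doob-Dynkin lemma, so no new descriptive-set-theoretic input beyond the two stated assumptions is needed here.
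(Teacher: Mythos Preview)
Your proposal is correct and follows essentially the same approach as the paper: define $v_k$ by keeping $g_k$ where $g_k(s_1,\ldots,s_k,r)\in C(s_k)$ and replacing it by $\psi(s_k)$ elsewhere, then verify measurability via Assumption~\ref{assumption:2} and the selection property via Assumption~\ref{assumption:1}. Your write-up is in fact more explicit than the paper's about why the ``good set'' is measurable (as the preimage of $G$ under a measurable map), which is a welcome clarification.
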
 

\begin{proof} Suppose $(X_k)_{k=1}^{\infty}$ satisfy \eqref{eq:control1}-\eqref{eq:control2}. 
Then \eqref{eq:Xg} holds for some measurable functions
$g_k:\Omega_S^k\times [0,1]\rightarrow \Omega_X$ and some $\script{G}$-measurable random variable $R:\Omega\rightarrow [0,1]$. Define $v_k:\Omega_S^k\times [0,1]\rightarrow \Omega_X$ by 
$$v_k(s_1, \ldots, s_k, r) = \left\{\begin{array}{cc}
g_k(s_1, \ldots, s_k, r) & \mbox{ if $g_k(s_1, \ldots, s_k, r) \in C(s_k)$} \\
\psi(s_k) & \mbox{ else} 
\end{array}\right.$$
Assumptions \ref{assumption:1},  \ref{assumption:2},  and measurability of $g_k$ imply that $v_k$ is 
measurable.  Since $\psi(s)\in C(s)$ for all $s \in \Omega_S$, function $v_k$ satisfies \eqref{eq:desired-v}. By \eqref{eq:Xg} and definition of $v_k$ we obtain \eqref{eq:Xv}.  

Conversely, suppose there are $v_k$ functions and a random variable $R$ that satisfy 
\eqref{eq:desired-v}-\eqref{eq:Xv}. Properties \eqref{eq:desired-v}-\eqref{eq:Xv} imply $X_k\in C(S_k)$ for all $k$, while measurability of $v_k$ and the structure 
$X_k=v_k(S_1, \ldots, S_k, R)$ ensure (by the Doob-Dynkin lemma) 
that $X_k$ is $\sigma(S_1, \ldots, S_k, R)$-measurable.  Since $\sigma(R)\subseteq \script{G}$ it holds that $X_k$ is $\sigma(\sigma(S_1)\cup\cdots\cup\sigma(S_k)\cup \script{G})$-measurable, so that \eqref{eq:control1}-\eqref{eq:control2} hold. 
\end{proof} 

The $v_k$ functions and the random variable $R:\Omega\rightarrow[0,1]$ in the above 
result completely specify a causal and measurable control policy: At time $0$, generate a $\script{G}$-measurable random variable $R:\Omega\rightarrow [0,1]$. At each step $k \in \{1, 2, 3, \ldots\}$, observe $(S_1, \ldots, S_k)$ and make the decision $X_k=v_k(S_1, \ldots, S_k, R)$. The above lemma ensures that, if Assumptions \ref{assumption:1}-\ref{assumption:2} hold, all  policies that satisfy \eqref{eq:control1}-\eqref{eq:control2} can be specified in this way.

\subsection{Another representation} \label{section:kallenberg}

The following representation theorem from Kallenberg \cite{kallenberg} bears  some resemblance to \eqref{eq:Xg} and uses the concept of a \emph{randomization variable $U$}.

\begin{thm} \label{thm:kallenberg} (Proposition 5.13 in  \cite{kallenberg}) Fix $(\Omega, \script{F}, P)$ as a probability triplet and let $X:\Omega\rightarrow \Omega_X$ and $S:\Omega\rightarrow \Omega_S$ be random elements where $(\Omega_X, \script{F}_X)$ is a 
Borel space and $(\Omega_S, \script{F}_S)$ is a measurable space. Suppose there is a random variable $U\sim \script{U}[0,1]$ that is independent of $(S,X)$ ($U$ is called a \emph{randomization variable}). Then  
$$ X=f(S,R) \quad \mbox{almost surely} $$ 
for some measurable function $f:\Omega_S\times [0,1]\rightarrow \Omega_X$ and some 
random variable $R \sim \script{U}[0,1]$ that is independent of $S$. Further, 
$R$ is   $\sigma(S,X,U)$-measurable. 
\end{thm}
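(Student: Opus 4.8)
The plan is to construct $f$ as a jointly measurable family of quantile functions for the conditional law of $X$ given $S$, and to construct $R$ by the classical distributional-transform coupling, using the independent variable $U$ only to ``smear'' the atoms of that conditional law.

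First I would reduce to the case in which $\Omega_X$ is a Borel subset $A\subseteq[0,1]$: replace $X$ by $b_X(X)$ for an isomorphism $b_X:\Omega_X\to A$ (which exists because $(\Omega_X,\mathcal F_X)$ is a Borel space), prove the statement in this $[0,1]$-valued setting, and recover the general case by applying $b_X^{-1}$, using $\sigma(S,b_X(X),U)=\sigma(S,X,U)$. Since $(\Omega_X,\mathcal F_X)$ is a Borel space, there is a regular conditional distribution of $X$ given $S$, realized as a probability kernel $\nu:\Omega_S\times\mathcal B([0,1])\to[0,1]$ with $P(X\in B\mid S)=\nu(S,B)$ almost surely for every $B$ [this is standard for Borel-space targets; see \cite{kallenberg}]. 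Write $\nu_s:=\nu(s,\cdot)$, $G_s(x):=\nu_s([0,x))$, $F_s(x):=\nu_s([0,x])$, and set
$$ f(s,u):=\inf\{x\in[0,1]:F_s(x)\ge u\},\qquad (s,u)\in\Omega_S\times[0,1].$$
Joint measurability of $f$ holds because $\{(s,u):f(s,u)\le x\}=\{(s,u):u\le F_s(x)\}$ and $s\mapsto F_s(x)=\nu(s,[0,x])$ is $\mathcal F_S$-measurable; the maps $(s,x)\mapsto G_s(x),F_s(x)$ are jointly measurable for the same reason.

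Next I would set
$$ R:=G_S(X)+U\big(F_S(X)-G_S(X)\big)=\nu(S,[0,X))+U\,\nu(S,\{X\}),$$
which is visibly $\sigma(S,X,U)$-measurable, giving the last assertion. The heart of the proof is to check, conditionally on $S=s$, that (i) $R$ is uniformly distributed on $[0,1]$, and (ii) $f(s,R)=X$ almost surely. Here the hypothesis $U\perp(S,X)$ is used to assert that, conditionally on $S=s$, the pair $(X,U)$ has law $\nu_s\otimes\mathcal U[0,1]$; granting that, (i) and (ii) are precisely the classical distributional-transform identity applied to the distribution function $F_s$ (an atom of $\nu_s$ of mass $p$ corresponds to a jump interval of $F_s$ of length $p$, over which $R$ is spread uniformly by the independent $U$, while on the atomless part $R$ is a deterministic function of $X$). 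Because the conditional law of $R$ given $S=s$ equals $\mathcal U[0,1]$ for \emph{every} $s$, it follows that $R\sim\mathcal U[0,1]$ and $R$ is independent of $S$; and integrating (ii) against the law of $S$ gives $f(S,R)=X$ almost surely. Finally, $f(s,\cdot)$ pushes Lebesgue measure on $[0,1]$ forward to $\nu_s$, which is concentrated on $A$, so $f(s,u)\in A$ for Lebesgue-almost-every $u$; redefining $f$ to take a fixed value of $A$ on the measurable set $\{(s,u):f(s,u)\notin A\}$ makes $f$ a genuine measurable map into $A$ without affecting $f(S,R)=X$ a.s., and composing with $b_X^{-1}$ produces the required $f:\Omega_S\times[0,1]\to\Omega_X$.

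I expect the main obstacle to be the conditional computation (i)--(ii): one must run the distributional-transform argument uniformly in $s$ while tracking joint measurability in $(s,\cdot)$, and handle the atomic and atomless parts of $\nu_s$ simultaneously. Everything else --- existence of the regular conditional distribution, the measurability of $f$ and $R$, the Borel-space reduction, and the passage from ``for every $s$'' to an unconditional statement --- is routine.
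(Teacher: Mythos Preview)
The paper does not contain its own proof of this theorem: it is quoted verbatim as Proposition~5.13 from Kallenberg \cite{kallenberg} and used as a black box for the subsequent corollaries, so there is no in-paper argument to compare against. That said, your proposal is essentially the standard proof (and indeed is the one Kallenberg gives): reduce to $\Omega_X\subseteq[0,1]$ via a Borel isomorphism, take a regular conditional distribution $\nu_s$ of $X$ given $S$, let $f(s,\cdot)$ be the generalized inverse of $F_s$, and set $R=G_S(X)+U\,\nu(S,\{X\})$; the independence $U\perp(S,X)$ is exactly what makes the conditional distributional-transform calculation go through uniformly in $s$. Your outline is correct and the anticipated obstacle is real but routine.
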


The next simple corollary changes ``almost surely'' to ``surely.''  

\begin{cor} \label{cor:sure-kallenberg} Under the same assumptions as Theorem \ref{thm:kallenberg} we can ensure $X=g(S,W)$ surely for some measurable function $g:\Omega_S\times [0,1]\rightarrow \Omega_X$ and some 
random variable $W \sim \script{U}[0,1]$ that is independent of $S$ and that is 
$\sigma(S,X,U)$-measurable. 
\end{cor}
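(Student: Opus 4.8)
The plan is to invoke Theorem~\ref{thm:kallenberg} to obtain $X=f(S,R)$ almost surely and then to repair this identity on its exceptional null set $N=\{\omega:X(\omega)\neq f(S(\omega),R(\omega))\}$ by smuggling the value of $X$ into the randomization variable. Theorem~\ref{thm:kallenberg} supplies a measurable $f:\Omega_S\times[0,1]\to\Omega_X$ together with $R\sim\script{U}[0,1]$ that is independent of $S$, is $\sigma(S,X,U)$-measurable, and satisfies $P(N)=0$. The only obstruction to a \emph{surely} statement is that $f(S(\omega),\cdot)$ may miss $X(\omega)$ for $\omega\in N$; I will remove it by reserving a Lebesgue-null set $E\subseteq[0,1]$ of second-argument values on which the new function $g$ decodes directly to a prescribed point of $\Omega_X$, and by forcing $W$ into $E$ exactly where $R$ is deficient.

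First I would construct the null-set encoding. Since $(\Omega_X,\script{F}_X)$ is a Borel space it is isomorphic to some $A\in\script{B}([0,1])$; composing such an isomorphism with the inverse of the isomorphism supplied by Lemma~\ref{lem:uncountable-isomorphism} for the Cantor middle-thirds set (an uncountable Borel subset of $[0,1]$ with Lebesgue measure zero) yields an isomorphism $e:\Omega_X\to E$ onto a Borel set $E\subseteq[0,1]$ with $\mathrm{Leb}(E)=0$. Set $M=\{\omega:R(\omega)\in E\}$; since $R\sim\script{U}[0,1]$ and $E$ is null, $P(M)=0$, and $M\in\sigma(S,X,U)$. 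I would also note $N\in\sigma(S,X,U)$: the map $\omega\mapsto(X(\omega),f(S(\omega),R(\omega)))$ is $\sigma(S,X,U)$-measurable into $(\Omega_X\times\Omega_X,\script{F}_X\otimes\script{F}_X)$, and $N$ is the preimage of the complement of the diagonal, which lies in the product sigma-algebra because the diagonal of a Borel space is obtained by transporting the closed diagonal of $[0,1]^2$ through an isomorphism, using $\script{B}([0,1]^2)=\script{B}([0,1])\otimes\script{B}([0,1])$.

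Next I would define $W:\Omega\to[0,1]$ to equal $R$ on $(N\cup M)^c$ and to equal $e(X)$ on $N\cup M$, and define $g:\Omega_S\times[0,1]\to\Omega_X$ by $g(s,w)=e^{-1}(w)$ for $w\in E$ and $g(s,w)=f(s,w)$ for $w\notin E$. Both are measurable: $g$ is built piecewise on $\Omega_S\times E$ and $\Omega_S\times E^{c}$ from $e^{-1}$, $f$, and a projection, while $W$ is built piecewise on sets in $\sigma(S,X,U)$ from the $\sigma(S,X,U)$-measurable maps $R$ and $e\circ X$. The surely identity $X=g(S,W)$ then follows by cases: for $\omega\notin N\cup M$ we have $W(\omega)=R(\omega)\notin E$, hence $g(S(\omega),W(\omega))=f(S(\omega),R(\omega))=X(\omega)$ since $\omega\notin N$; for $\omega\in N\cup M$ we have $W(\omega)=e(X(\omega))\in E$, hence $g(S(\omega),W(\omega))=e^{-1}(e(X(\omega)))=X(\omega)$. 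Finally, since $W$ agrees with $R$ off the $P$-null set $N\cup M$, for every Borel $A\subseteq[0,1]$ and every $B\in\script{F}_S$ we get $P(\{W\in A\}\cap\{S\in B\})=P(\{R\in A\}\cap\{S\in B\})=\mathrm{Leb}(A)\,P(S\in B)$; taking $B=\Omega$ gives $W\sim\script{U}[0,1]$, and then the same identity gives independence of $W$ and $S$.

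The step I expect to be the main obstacle is getting the ``surely'' clause exactly right rather than merely ``almost surely.'' The essential realization is that correcting $W$ on $N$ alone is not enough: an individual $\omega\in N^{c}$ might have $R(\omega)$ land by accident in the decoding region $E$, which is precisely why $M=\{R\in E\}$ must also be excised from the ``good'' region. A secondary point is the existence of the null-set encoding $e$, which rests on Lemma~\ref{lem:uncountable-isomorphism} together with the fact that the Cantor set has Lebesgue measure zero; and a minor point is checking $N\in\sigma(S,X,U)$ via measurability of the diagonal. Everything else is routine bookkeeping.
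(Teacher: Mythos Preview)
Your proof is correct and follows essentially the same approach as the paper: reserve a Lebesgue-null Borel set in $[0,1]$ that encodes $\Omega_X$, redirect $W$ into that set on the exceptional event, and let $g$ decode there. Your version is slightly slicker in that embedding $\Omega_X$ into a Borel \emph{subset} of the Cantor set (via $b^{-1}|_A$) handles the countable and uncountable cases uniformly, whereas the paper treats them separately; you also spell out the $\sigma(S,X,U)$-measurability of $N$ and the independence of $W$ from $S$, which the paper leaves implicit.
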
 

\begin{proof} First consider the case when $\Omega_X$ is an uncountably infinite set. 
Theorem \ref{thm:kallenberg} 
implies $X=f(S,R)$ almost surely for some measurable $f$ and some random variable $R \sim \script{U}[0,1]$ that is independent of $S$. 
Let $C$ be an uncountable Borel measurable subset of $[0,1]$ that has measure 0, such as the Cantor set. Let $b:C\rightarrow \Omega_X$ be an isomorphism (recall Lemma \ref{lem:uncountable-isomorphism}). 
Define the random variable $W:\Omega\rightarrow [0,1]$ by 
$$ W = \left\{\begin{array}{cc}
R & \mbox{ if $X=f(S,R)$ and $R\notin C$} \\
b^{-1}(X) & \mbox{ else} 
\end{array}\right.$$ 
Since $P[X=f(S,R)]=1$ and $P[R \notin C]=1$ we have that $P[W=R]=1$ and so $W$ is also uniformly distributed over $[0,1]$ and independent of $S$.  By definition of $W$ we have 
\begin{align}
&W \notin C \implies \mbox{($W=R$ and $X=f(S,R)$)}\label{eq:notin} \\
&W \in C \implies W=b^{-1}(X) \label{eq:in} 
\end{align} 
Define the measurable function $g:\Omega_S\times [0,1]\rightarrow \Omega_X$ by 
$$ g(s, w) = \left\{\begin{array}{cc}
f(s,w) & \mbox{ if $w \notin C$} \\
b(w) & \mbox{ else}
\end{array}\right.$$
It remains to show $X=g(S,W)$. If $W \notin C$ then by definition of $g$ we have 
\begin{align*}
g(S,W)&=f(S,W)\\
&\overset{(a)}{=}X
\end{align*}
where (a) holds by \eqref{eq:notin}. If $W \in C$ then by definition of $g$ we have 
$$g(S,W)=b(W)=b(b^{-1}(X)) = X$$ 
where we have used \eqref{eq:in}.  The case when $\Omega_X$ is finite or countably infinite is similar and proceeds by 
defining $C$ as a subset of $[0,1]$ with the same cardinality as $\Omega_X$. 
\end{proof} 


\begin{cor} \label{cor:vk} If random elements $(S_k)_{k=1}^{\infty}$ and $(X_k)_{k=1}^{\infty}$  satisfy  $X_k \in C(S_k)$ for all $k \in \mathbb{N}$ (where each $X_k:\Omega\rightarrow \Omega_X$ is measurable with respect to the Borel space $(\Omega_X, \script{F}_X)$; each $S_k:\Omega\rightarrow \Omega_S$ is measurable with respect to the general measurable space $(\Omega_S, \script{F}_S)$), 
and 
if there is a random variable $U\sim \script{U}[0,1]$ that is 
independent of $(S_k, X_k)_{k=1}^{\infty}$, then 

a) For all $k \in \mathbb{N}$ we (surely) have 
$X_k = g_k(S_k, W_k) \in C(S_k)$
for some measurable function $g_k:\Omega_S\times [0,1]\rightarrow \Omega_X$ and some random variable
$W_k \sim \script{U}[0,1]$ that is independent of $S_k$. 

b) If Assumptions \ref{assumption:1}-\ref{assumption:2} hold then for all $k \in \mathbb{N}$ there is a measurable function 
$v_k:\Omega_S\times [0,1]\rightarrow \Omega_X$ that satisfies 
$v_k(s, r) \in C(s)$ for all $(s,r) \in \Omega_S \times [0,1]$ 
such that 
\begin{equation} \label{eq:Xv2} 
X_k=v_k(S_k, W_k) 
\end{equation} 
where the random variables $W_k$ 
are the same as in part (a). 
\end{cor}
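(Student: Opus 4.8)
The plan is to get part~(a) as a slot-by-slot invocation of Corollary~\ref{cor:sure-kallenberg}, and then to derive part~(b) by the same ``repair'' construction used in the proof of the Lemma in Section~\ref{section:control}.

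For part~(a), fix $k \in \mathbb{N}$. Since $U \sim \script{U}[0,1]$ is independent of the whole sequence $(S_k, X_k)_{k=1}^{\infty}$, it is in particular independent of the single pair $(S_k, X_k)$, hence it qualifies as a randomization variable in the sense of Theorem~\ref{thm:kallenberg}. Applying Corollary~\ref{cor:sure-kallenberg} with $X := X_k$ and $S := S_k$ produces a measurable function $g_k : \Omega_S \times [0,1] \rightarrow \Omega_X$ and a random variable $W_k \sim \script{U}[0,1]$ that is independent of $S_k$ (and $\sigma(S_k, X_k, U)$-measurable) such that $X_k = g_k(S_k, W_k)$ \emph{surely}, i.e.\ for every $\omega \in \Omega$. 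Because the hypothesis $X_k \in C(S_k)$ holds pointwise, substitution gives $g_k(S_k(\omega), W_k(\omega)) = X_k(\omega) \in C(S_k(\omega))$ for all $\omega$, which is precisely the asserted $X_k = g_k(S_k, W_k) \in C(S_k)$.

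For part~(b), retain the $g_k$ and $W_k$ from part~(a) and define $v_k : \Omega_S \times [0,1] \rightarrow \Omega_X$ by $v_k(s,r) = g_k(s,r)$ when $g_k(s,r) \in C(s)$ and $v_k(s,r) = \psi(s)$ otherwise, where $\psi$ is the measurable choice function of Assumption~\ref{assumption:1}. The set on which the first branch applies is the preimage of $\{(s,x) \in \Omega_S \times \Omega_X : x \in C(s)\}$ under the measurable map $(s,r) \mapsto (s, g_k(s,r))$ from $\Omega_S \times [0,1]$ into $\Omega_S \times \Omega_X$; by Assumption~\ref{assumption:2} this preimage lies in $\script{F}_S \otimes \script{B}([0,1])$, so $v_k$, being a patch of two measurable functions on complementary measurable pieces, is measurable. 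By construction $v_k(s,r) \in C(s)$ for every $(s,r) \in \Omega_S \times [0,1]$ (first branch directly, second branch since $\psi(s) \in C(s)$). Finally, part~(a) shows that for every $\omega$ the point $(S_k(\omega), W_k(\omega))$ lies in the first branch, because $g_k(S_k(\omega), W_k(\omega)) = X_k(\omega) \in C(S_k(\omega))$, so $v_k(S_k(\omega), W_k(\omega)) = g_k(S_k(\omega), W_k(\omega)) = X_k(\omega)$, which establishes \eqref{eq:Xv2}.

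The argument is essentially routine once Corollary~\ref{cor:sure-kallenberg} is available; the only step needing a little care is the measurability of the first branch in the definition of $v_k$, which is exactly where Assumption~\ref{assumption:2} is used (mirroring its role in the proof of the Lemma in Section~\ref{section:control}), together with the observation that the pointwise (not merely almost-sure) validity of $X_k \in C(S_k)$, combined with the ``surely'' strengthening in Corollary~\ref{cor:sure-kallenberg}, is what allows the conclusion $X_k = v_k(S_k, W_k)$ to hold for all $\omega$ rather than only almost everywhere.
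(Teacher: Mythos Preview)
Your proposal is correct and follows essentially the same approach as the paper: part~(a) is a direct application of Corollary~\ref{cor:sure-kallenberg} to each fixed $k$, and part~(b) defines $v_k$ by the same piecewise ``repair'' using $g_k$ and the measurable selector $\psi$. Your write-up actually spells out a bit more detail than the paper does (the preimage argument for measurability of the branching set, and the pointwise verification that $(S_k,W_k)$ always lands in the first branch), but the underlying argument is identical.
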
 

\begin{proof} 
Part (a) follows immediately from Corollary \ref{cor:sure-kallenberg}.  
To prove (b), fix $k \in \{1, 2, 3, \ldots\}$ and define
$$ v_k(s,r) = \left\{\begin{array}{cc}
g_k(s,r) & \mbox{ if $g_k(s,r) \in C(s)$} \\
\psi(s) & \mbox{ else} 
\end{array}\right.$$ 
where $g_k$ is from part (a). 
Assumptions \ref{assumption:1} and \ref{assumption:2} and measurability of $g_k$ ensure measurability of $v_k$. 
Since $\psi(s)\in C(s)$ for all $s \in \Omega_S$, it is clear that $v_k(s,r) \in C(s)$ for all $(s,r) \in \Omega_S\times [0,1]$. 
By part (a) it holds that $X_k=v_k(S_k,W_k)$. 
\end{proof} 

The equality \eqref{eq:Xv2} has a simpler structure than \eqref{eq:Xv}. However, the $v_k$ functions in \eqref{eq:Xv} completely specify a causal and measurable control policy.  In contrast, the $v_k$ functions in \eqref{eq:Xv2} do not specify a control policy because each $W_k$ may have some unknown dependence on $S_1, \ldots, S_{k-1}$ as well as on additional sources of (potentially noncausal) randomness. 

\subsection{Opportunistic scheduling} 

The following special case  is of interest in the area of wireless networks.  
Fix $m \in \mathbb{N}$ and let $(\mathbb{R}^m, \script{B}(\mathbb{R}^m))$ be the measurable space for the decision elements $X_k$. There are $m$ different wireless links that can change over time according to states $(S_k)_{k=1}^{\infty}$, where $S_k$ describes the state of all channels on slot $k$. At the start of each slot $k \in \mathbb{N}$ we observe $S_k$ and then choose a   transmission rate vector $X_k \in C(S_k)$, where $C(S_k)\subseteq \mathbb{R}^m$ is the set of all transmission rate options available when the channel state is $S_k$ (different rate options arise, for example, from different modulation and coding choices).   This is called an \emph{opportunistic scheduling system} because the state $S_k$ is known before  $X_k$ is decided. Control strategies for 
such systems consider network 
stability \cite{tass-server-allocation}\cite{neely-power-network-jsac}, 
utility maximization \cite{now}\cite{sno-text}\cite{shroff-opportunistic}\cite{atilla-primal-dual-jsac}\cite{stolyar-greedy}\cite{neely-fairness-ton}, and energy minimization \cite{neely-energy-it}.  
Assume  $(S_k)_{k=1}^{\infty}$ are identically distributed random elements associated with a measurable space $(\Omega_S, \script{F}_S)$ and  a distribution 
$\lambda:\script{F}_S\rightarrow [0,1]$: 
$$ \lambda(A) = P[S_k \in A] \quad \forall A \in \script{F}_S$$
The full sequence $(S_k)_{k=1}^{\infty}$ is ``chosen by nature'' at time 0.  In a causal decision scenario, on step $k$ the controller only knows the values of $S_1, \ldots, S_k$ before choosing $X_k \in C(S_k)$.  In a noncausal scenario the full $(S_k)_{k=1}^{\infty}$ sequence is known. 
Assume   $\script{G}\subseteq \script{F}$ is a subsigma algebra independent of $\sigma((S_k)_{k=1}^{\infty})$ that is used as a 
source of randomness to facilitate stochastic decisions.  Assume there is a random variable $U \sim \script{U}[0,1]$ that is $\script{G}$-measurable. 
 
The work \cite{sno-text} defines the  
\emph{network rate region} $\Gamma$ as the set of all expectations of $X_1$ that can be achieved on the first slot, shows this set is the same for all slots, and determines the fundamental \emph{capacity region} (see also \cite{tass-server-allocation}\cite{neely-power-network-jsac}) when such a transmission system is used for single and multi-hop queueing networks.\footnote{For 1-hop networks the \emph{capacity region} is the set of all vectors that are dominated by a vector in the closure of $\Gamma$. For multi-hop networks the capacity region depends on all possible multi-hop flow allocations available on graphs associated with points in the closure of $\Gamma$ \cite{neely-power-network-jsac}\cite{now}\cite{tass-radio-nets}.}  The argument in \cite{sno-text} implicitly allows expanding the probability space to ensure the sigma algebra $\script{G}$ is complex enough to emulate an independent virtual system with identical stochastics over any number of virtual slots before the slot $1$ decision on the actual system is made.  The next results  do not require expanding the probability space and allow $\script{G}$ to be as simple as $\script{G}=\sigma(U)$. 


\begin{assumption} \label{assumption:3}  For the  function $C:\Omega_S\rightarrow Pow(\mathbb{R}^m)$, there is a bounded subset $D\subseteq \mathbb{R}^m$ such that $C(s)$ is nonempty and  $C(s) \subseteq D$ for all $s \in \Omega_S$.\footnote{Assumption \ref{assumption:3} is mainly for convenience and can be replaced by the weaker  assumption that expectations of random vectors $X_k \in C(S_k)$ are finite.}
\end{assumption}

\begin{defn}  Given a distribution $\lambda:\script{F}_S\rightarrow [0,1]$ and a function $C:\Omega_S\rightarrow Pow(\mathbb{R}^m)$ that satisfies Assumption \ref{assumption:3}, define the \emph{rate region} $\Gamma \subseteq \mathbb{R}^m$ as the set of all expectation vectors 
$\expect{v(S,U)}$ that can be achieved by some measurable function $v:\Omega_S\times \mathbb{R}\rightarrow \mathbb{R}^m$ that satisfies $v(s,w) \in C(s)$ for all $s\in \Omega_S$ and $w \in \mathbb{R}$, and on a probability space with independent random elements $S$ and $U$ such that $S$ has distribution $\lambda$ and $U\sim\script{U}[0,1]$.  
\end{defn} 

Define $\overline{\Gamma}$ as the closure of the set $\Gamma$. 
Using Corollary \ref{cor:vk}b, it is straightforward to show that Assumptions \ref{assumption:1},  \ref{assumption:2}, \ref{assumption:3} imply that $\Gamma$ is nonempty, bounded, and convex, while $\overline{\Gamma}$ is nonempty, compact, and convex.  It can be shown the definition of $\Gamma$ is unchanged if one allows $U$ to be a  
random variable of \emph{any} distribution, provided that $U$ and $S$ are independent. 
The next lemma shows that  $\overline{\Gamma}$ captures all 
time average expectations of $X_k$ that can be achieved at any time $k$ by a measurable decision policy  for choosing $X_k\in C(S_k)$,  regardless of whether or not the policy is causal.  Sample path time averages are also considered in the lemma using a theory of \emph{constructible sets} \cite{constructible-book}.  Counter-examples in Section \ref{section:counter-example} show that time averages can be far outside the set $\overline{\Gamma}$ if the controller can make nonmeasurable decisions.

\begin{prop}  Suppose Assumptions \ref{assumption:1}, \ref{assumption:2}, \ref{assumption:3} hold for the opportunistic scheduling problem with identically distributed random elements $(S_k)_{k=1}^{\infty}$ with some distribution $\lambda$.   Let 
$(X_k)_{k=1}^{\infty}$ be a sequence of (Borel measurable) random vectors that satisfy  
$X_k\in C(S_k)$ surely for each $k \in \mathbb{N}$. Then 

a) For all $k \in \mathbb{N}$ we have $\expect{X_k} \in \Gamma$  and $\frac{1}{k}\sum_{i=1}^k \expect{X_i} \in \Gamma$.  

b) If $(S_k)_{k=1}^{\infty}$ is i.i.d. and $S_k$ is independent of $(X_1, \ldots, X_{k-1})$ 
for all $k \in \{2, 3, 4, \ldots\}$ then for all $k\in \mathbb{N}$
$$ \expect{X_k|\script{H}_k} \in \overline{\Gamma} \quad \mbox{almost surely} $$
where $\script{H}_k=\sigma(X_1, \ldots, X_{k-1})$ for $k\geq 2$ and $\script{H}_1=\{\phi, \Omega\}$. 

c) If $(S_k)_{k=1}^{\infty}$ is i.i.d. and $S_k$ is independent of $(X_1, \ldots, X_{k-1})$ for all $k \in \{2, 3, 4, \ldots\}$ then  
$$ \mbox{$\lim_{k\rightarrow\infty} \mbox{dist}\left(\frac{1}{k}\sum_{i=1}^k X_i, \overline{\Gamma}\right) = 0 \quad \mbox{almost surely}$}$$
where $\mbox{dist}(x, \overline{\Gamma})$ is the Euclidean distance between a point $x \in \mathbb{R}^m$ and the compact and convex set $\overline{\Gamma} \subseteq \mathbb{R}^m$. 
 \end{prop}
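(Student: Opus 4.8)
The plan is to prove (a), (b), (c) in turn, each time reducing the assertion to the definition of $\Gamma$ by enlarging the probability space and applying Corollary \ref{cor:vk}(b). For part (a), I would first enlarge $(\Omega,\script{F},P)$ to the product space $(\Omega\times[0,1],\;\script{F}\otimes\script{B}([0,1]),\;P\otimes\mu_{\mathrm{Leb}})$, whose last coordinate is a random variable $U\sim\script{U}[0,1]$ independent of $(S_k,X_k)_{k=1}^{\infty}$; this leaves unchanged the (finite, by Assumption \ref{assumption:3}) expectations $\expect{X_k}$, the law $\lambda$ of each $S_k$, and the pointwise relation $X_k\in C(S_k)$. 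Corollary \ref{cor:vk}(b), applicable since Assumptions \ref{assumption:1}--\ref{assumption:2} hold, then yields a measurable $v_k:\Omega_S\times[0,1]\rightarrow\mathbb{R}^m$ with $v_k(s,r)\in C(s)$ for all $(s,r)$ and a random variable $W_k\sim\script{U}[0,1]$ independent of $S_k$ with $X_k=v_k(S_k,W_k)$. Extending $v_k$ to $\Omega_S\times\mathbb{R}$ by declaring it to equal $\psi(s)$ (Assumption \ref{assumption:1}) off $[0,1]$, so $v_k(s,w)\in C(s)$ still holds everywhere, and using $W_k\in[0,1]$ a.s., the pair $(S_k,W_k)$ exhibits $\expect{X_k}=\expect{v_k(S_k,W_k)}$ as a point of $\Gamma$ straight from the definition; the Cesàro claim $\frac1k\sum_{i=1}^{k}\expect{X_i}\in\Gamma$ then follows because $\Gamma$ is convex, as recorded just before the proposition.

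For part (b), I would first record the elementary fact that, for the nonempty compact convex set $\overline{\Gamma}$: if $Y$ is an integrable $\script{H}_k$-measurable random vector with $\expect{1_A Y}/P(A)\in\overline{\Gamma}$ for every $A\in\script{H}_k$ with $P(A)>0$, then $Y\in\overline{\Gamma}$ almost surely; this follows by separating $\overline{\Gamma}$ from any positive-probability failure event by one of the countably many rational half-spaces $\{x:\langle q,x\rangle\le r\}$ whose intersection is $\overline{\Gamma}$. Applied with $Y=\expect{X_k\mid\script{H}_k}$, for which $\expect{1_A Y}=\expect{1_A X_k}$, it remains to show $\expect{1_A X_k}/P(A)\in\Gamma$ for each such $A$. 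The key observation is that, because $A\in\script{H}_k=\sigma(X_1,\dots,X_{k-1})$ and $S_k$ is independent of $(X_1,\dots,X_{k-1})$, under $P(\cdot\mid A)$ the state $S_k$ still has law $\lambda$ while $X_k\in C(S_k)$ still holds surely; running the part (a) argument on $(\Omega,\script{F},P(\cdot\mid A))$ then gives $\expect{1_A X_k}/P(A)\in\Gamma\subseteq\overline{\Gamma}$.

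For part (c), write $\bar X_k=\frac1k\sum_{i=1}^{k}X_i$ and $h(c)=\sup_{x\in\overline{\Gamma}}\langle c,x\rangle$. Since $\mbox{dist}(y,\overline{\Gamma})=\max_{\|c\|=1}\bigl(\langle c,y\rangle-h(c)\bigr)^{+}$ for the compact convex $\overline{\Gamma}$, and every $\bar X_k$ lies in the fixed bounded set $\mathrm{conv}(D)$ (Assumption \ref{assumption:3}) on which $c\mapsto\langle c,y\rangle-h(c)$ is uniformly Lipschitz, it suffices to prove, for each $c$ in a countable dense subset of the unit sphere, that $\limsup_{k}\frac1k\sum_{i=1}^{k}Z_i\le0$ almost surely, where $Z_i=\langle c,X_i\rangle-h(c)$; a routine subsequence argument using compactness of $\mathrm{conv}(D)$ and the supporting-hyperplane theorem then upgrades this to $\mbox{dist}(\bar X_k,\overline{\Gamma})\to0$ a.s. The $Z_i$ are uniformly bounded, and part (b) with the tower property gives $\expect{Z_k\mid\script{H}_k}=\langle c,\expect{X_k\mid\script{H}_k}\rangle-h(c)\le0$ a.s. Writing $Z_i=(Z_i-\expect{Z_i\mid\script{H}_i})+\expect{Z_i\mid\script{H}_i}$, the centred terms are a bounded-increment martingale difference sequence for the natural filtration of $(X_k)$, so $\sum_{i}i^{-1}(Z_i-\expect{Z_i\mid\script{H}_i})$ converges a.s.\ by $L^2$-bounded martingale convergence, hence $\frac1k\sum_{i=1}^{k}(Z_i-\expect{Z_i\mid\script{H}_i})\to0$ a.s.\ by Kronecker's lemma; since also $\frac1k\sum_{i=1}^{k}\expect{Z_i\mid\script{H}_i}\le0$, the two pieces add to give $\limsup_k\frac1k\sum_{i=1}^{k}Z_i\le0$ a.s.

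The step I expect to be the main obstacle is making the conditioning rigorous while $\Omega_S$ remains an \emph{arbitrary} measurable space: one cannot disintegrate along $S_k$ or build a regular conditional distribution on $\Omega_S$, so all conditioning must stay at the level of events in $\script{H}_k$ and each conditional statement must be re-exported into the externally defined $\Gamma$ via a product-space extension and Corollary \ref{cor:vk}(b). Checking that these extensions and conditionings preserve the law $\lambda$ of $S_k$, the sure constraint $X_k\in C(S_k)$, and the relevant finite expectations is the delicate bookkeeping; once the reduction in part (b) is set up, parts (b) and (c) are otherwise standard separating-hyperplane and martingale law-of-large-numbers arguments.
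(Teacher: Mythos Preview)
Your proposal is correct. Part (a) matches the paper exactly. Parts (b) and (c) take different but valid routes.

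For part (b), the paper does \emph{not} pass to the conditional measure $P(\cdot\mid A)$. Instead it treats $(S_k,1_A)$ as a composite state in $\Omega_S\times\{0,1\}$ and applies Corollary~\ref{cor:vk}(b) once on the original extended space, obtaining $X_k=v(S_k,1_A,W)$ with $W\sim\script{U}[0,1]$ independent of $(S_k,1_A)$; since $S_k$ is also independent of $1_A$, the three are mutually independent, and $\expect{X_k\mid A}=\expect{v(S_k,1,W)}\in\Gamma$ drops out directly. Your route---condition on $A$, observe that $S_k$ still has law $\lambda$ under $P(\cdot\mid A)$, and re-run the part (a) extension argument on that space---works equally well and is arguably more transparent; the paper's composite-state trick buys a single invocation of Corollary~\ref{cor:vk}(b) without changing the ambient measure. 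Both rely on the same constructibility fact (countable half-space description of $\overline{\Gamma}$) for the separation step.

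For part (c), the paper is shorter: it works directly with the vector martingale differences $M_i=X_i-\expect{X_i\mid\script{H}_i}$, invokes the bounded martingale-difference law of large numbers to get $\frac{1}{k}\sum_{i=1}^k M_i\to 0$ a.s., and notes that $\frac{1}{k}\sum_{i=1}^k\expect{X_i\mid\script{H}_i}\in\overline{\Gamma}$ by convexity, so $\mbox{dist}\bigl(\frac{1}{k}\sum_{i=1}^k X_i,\overline{\Gamma}\bigr)\le\bigl\|\frac{1}{k}\sum_{i=1}^k M_i\bigr\|\to 0$. Your support-function reduction to scalars, Kronecker's lemma, and the subsequence/compactness upgrade are all correct but unnecessary once you observe that the vector average of the conditional expectations already lies in $\overline{\Gamma}$.
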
 

\begin{proof} Without loss of generality, for parts (a)-(b) we can assume existence of a random variable 
$U\sim \script{U}[0,1]$ of the form $U:\Omega\rightarrow [0,1]$ that is  independent of $(S_k, X_k)_{k=1}^{\infty}$. Indeed, if this does not hold then we can extend the probability space to a new space
$(\tilde{\Omega}, \tilde{\script{F}}, \tilde{P})$ such that 
$$\tilde{\Omega} = \Omega \times [0,1], \quad \tilde{\script{F}} = \script{F} \otimes \script{B}([0,1]), \quad \tilde{P}=P \otimes \mu$$
where $\mu$ is the standard Borel measure on Borel subsets of $[0,1]$.  Each outcome of the new sample space has the form 
$\tilde{\omega} = (\omega, t)$ where $\omega \in \Omega$ and $t \in [0,1]$.    Then define 
$\tilde{S}_k:\tilde{\Omega}\rightarrow \Omega_S$ and $\tilde{X}_k:\tilde{\Omega}\rightarrow \mathbb{R}^m$ by
$$
\tilde{S}_k(\omega, t) = S_k(\omega) \quad , \quad \tilde{X}_k(\omega, t) = X_k(\omega)
$$
Also define $U:\tilde{\Omega} \rightarrow [0,1]$ by $U(\omega, t) = t$ and observe that $U\sim \script{U}[0,1]$ and $U$ is independent of $(\tilde{X}_k,\tilde{S}_k)_{k=1}^{\infty}$. Then $(\tilde{X}_k,\tilde{S}_k)_{k=1}^{\infty}$ on the extended probability space has the same distribution 
as $(X_k, S_k)_{k=1}^{\infty}$ on the original space. Thus, $X_k$ and $\tilde{X}_k$ have the same expectation (useful for part (a));  $f(X_1, \ldots, X_{k-1})$ and $f(\tilde{X}_1, \ldots, \tilde{X}_{k-1})$ have the same distribution for any measurable function $f$ (useful for part (b)). 

To prove (a), suppose there is a sequence of random vectors
$(X_k)_{k=1}^{\infty}$ that satisfy  $X_k\in C(S_k)$ surely for each $k \in \{1, 2, 3, \ldots\}$. 
Assuming existence of $U\sim\script{U}[0,1]$ that is independent of $(S_k, X_k)_{k=1}^{\infty}$, apply Corollary \ref{cor:vk}b to obtain 
\begin{equation} \label{eq:X-tilde}
X_k=v_k(S_k, W_k)
\end{equation} 
for some measurable function $v_k:\Omega_S\times [0,1]\rightarrow \mathbb{R}^m$ that satisfies $v_k(s,v) \in C(s)$ for all $s\in \Omega_S, t \in [0,1]$ and some random variable $W_k \sim \script{U}[0,1]$ that is independent of $S_k$. By definition of $\Gamma$ it holds that $\expect{X_k} \in \Gamma$.  This holds for all $k \in \mathbb{N}$. Convexity of $\Gamma$ ensures that $\frac{1}{k}\sum_{i=1}^k \expect{X_k} \in \Gamma$ for all $k \in \mathbb{N}$.

To prove (b), assume $(S_k)_{k=1}^{\infty}$ are i.i.d. and fix $k \in \{2, 3, 4, \ldots\}$.  
Let $Z$ be a version of $\expect{X_k|\script{H}_k}$. Since $\overline{\Gamma}$ is a closed subset of $\mathbb{R}^m$ it is Borel measurable and $\{Z \notin \overline{\Gamma}\}$ is an event.  Suppose $P[Z \notin  \overline{\Gamma}]>0$ (we reach a contradiction). 
Since $\overline{\Gamma}$ is compact and convex it is \emph{constructible}, meaning
it is the countable intersection 
of closed half-spaces (see Proposition 7.5.6 in 
\cite{constructible-book}): 
\begin{equation} \label{eq:countable-intersection}
 \overline{\Gamma} = \cap_{j=1}^{\infty} \{x \in \mathbb{R}^m : a_j^{\transpose}x \leq b_j\}
 \end{equation} 
 for some  $a_j \in \mathbb{R}^m$ and $b_j \in \mathbb{R}$ for $j \in \mathbb{N}$.  Then 
 $$P\left[Z \notin \overline{\Gamma}\right] = P[\cup_{j=1}^{\infty} \{a_j^{\transpose}Z > b_j\}]$$
Since the above probability is positive, there must be an index $i \in \mathbb{N}$ for which 
$P[a_i^{\transpose}Z>b_i]>0$.  Define the event $A = \{a_i^{\transpose}Z>b_i\}$. Then $P[A]>0$.  
Since $Z$ is a version of $\expect{X_k|\script{H}_k}$ it is $\script{H}_k$-measurable and $A \in \script{H}_k$. By the defining property of a conditional expectation it holds that 
$$ \expect{Z1_A} = \expect{X_k1_A}$$
and so 
$$ \frac{\expect{a_i^{\transpose}Z1_A}}{P[A]} = \frac{a_i^{\transpose}\expect{X_k1_A}}{P[A]} $$
Since $U \sim \script{U}[0,1]$ is independent of the random element $(S_k, 1_A) \in \Omega_S \times \{0,1\}$ we have by Corollary \ref{cor:vk}b 
$$ X_k = v(S_k, 1_A, W)$$
for some random variable $W \sim \script{U}[0,1]$ that is independent of $(S_k, 1_A)$, and 
for some measurable 
function $v:\Omega_S\times \{0,1\} \times [0,1]\rightarrow \mathbb{R}^m$ such that 
$$v(s, b, w) \in C(s) \quad \forall s \in \Omega_S, b \in \{0,1\}, w \in [0,1]$$
Then 
\begin{align} 
\frac{\expect{a_i^{\transpose}Z1_A}}{P[A]} &= \frac{a_i^{\transpose}\expect{v(S_k, 1_A, W)1_A}}{P[A]}\\
&=a_i^{\transpose}\expect{v(S_k, 1, W)|A}  \\
&=a_i^{\transpose} \expect{v(S_k, 1,  W)}
\end{align} 
where we have used the fact that $(S_k, W)$ is independent of $A$ (recall that $W$ is independent of $(S_k, 1_A)$ and $S_k$ is independent of the $\script{H}_k$-measurable random variable $1_A$, so that $S_k, W, 1_A$ are mutually independent).   Defining $y=\expect{v(S_k,1,W)}$, it follows by 
definition of $\Gamma$ that $y \in \Gamma \subseteq \overline{\Gamma}$ and so $a_i^{\transpose}y\leq b_i$ (recall \eqref{eq:countable-intersection}).   Thus
$$\frac{\expect{a_i^{\transpose}Z1_A}}{P[A]} \leq b_i$$
and so 
\begin{equation}\label{eq:negative} 
\expect{(a_i^{\transpose}Z - b_i)1_A} \leq 0
\end{equation} 
By definition of event $A=\{a_i^{\transpose}Z>b_i\}$, the random variable $(a_i^{\transpose}Z-b_i)1_A$ is nonnegative, so  \eqref{eq:negative} implies
$$(a_i^{\transpose}Z - b_i)1_A = 0 \quad \mbox{almost surely} $$
However, $(a_i^{\transpose}Z - b_i)1_A>0$ if and only if $1_A>0$, which contradicts the fact that $P[A]>0$. 

To prove (c), define $M_1=X_1-\expect{X_1}$ and define 
$M_k=X_k - \expect{X_k|\script{H}_k}$ for $k \in \{2, 3, 4, \ldots\}$. Then $\{M_k\}_{k=1}^{\infty}$ 
is a zero mean sequence of bounded random vectors in $\mathbb{R}^m$ where each component forms a martingale difference, and so the law of large numbers for martingale differences implies that $\frac{1}{k}\sum_{i=1}^kM_i$ converges to 0 almost surely \cite{chow-lln}. Thus, $\frac{1}{k}\sum_{i=1}^kX_i - \frac{1}{k}\sum_{i=1}^k\expect{X_k|\script{H}_k}$ converges to 0 almost surely. However, part (b) implies $\expect{X_k|\script{H}_k}\in \overline{\Gamma}$ almost surely, and convexity of $\overline{\Gamma}$
implies $\frac{1}{k}\sum_{i=1}^k\expect{X_k|\script{H}_k} \in \overline{\Gamma}$ almost surely. 
\end{proof} 

Conversely, if $(S_k)_{k=1}^{\infty}$ is identically distributed with distribution $\lambda$ and $U\sim \script{U}[0,1]$ is independent of $(S_k)_{k=1}^{\infty}$, then for any $x \in \overline{\Gamma}$ it is straightforward to see there is a causal and measurable decision policy that chooses $X_k \in C(S_k)$ to ensure: 
\begin{align*}
\mbox{$\lim_{k\rightarrow\infty} \frac{1}{k}\sum_{i=1}^k \expect{X_i} = x$}
\end{align*}
and if $(S_k)_{k=1}^{\infty}$ are i.i.d. then we can further ensure
\begin{align*}
\mbox{$\lim_{k\rightarrow\infty} \frac{1}{k}\sum_{i=1}^k X_i = x \quad \mbox{almost surely}$}
\end{align*}
This is done by first mapping the random variable $U \sim \script{U}[0,1]$ to a sequence of i.i.d. $\script{U}[0,1]$ random variables $(U_1, U_2, U_3, \ldots)$ by using the bits corresponding to the binary expansion of $U$.  Then fix any sequence of points $x_k \in \Gamma$ that satisfy $x_k\rightarrow x$ and define $X_k=v_k(S_k, U_k)$ for $k \in \mathbb{N}$ where $v_k$ is the corresponding measurable function in the definition of $\Gamma$ that ensures $X_k \in C(S_k)$ and $\expect{X_k} = x_k$ for all $k \in \mathbb{N}$.


\section{Counter-examples} \label{section:counter-example} 

This section considers pathological cases for the opportunistic scheduling
problem with i.i.d. channel states $(S_k)_{k=1}^{\infty}$ and only $m=1$ channel.  
We use $(\Omega_S, \script{F}_S)=([0,1], \script{B}([0,1]))$ throughout. 

\subsection{Nonmeasurable policies} \label{section:nonmeasurable} 

This example gives an opportunistic scheduling 
system for which a nonmeasurable policy produces  
larger time averages in comparison to any measurable policy.  
It is similar in spirit to the example given by 
Blackwell in \cite{blackwell-memoryless} which shows that a single 
decision at one step of a finite stage dynamic program can bring arbitrarily more utility if it makes a measurable decision based on both the current state and memory, rather than only on the current state (even if the utility function depends only on the current state and the current decision). However, the structure of that example is different from ours and compares two measurable policies rather than a nonmeasurable policy in comparison to any measurable decision.

Define 
$$\Omega = [0,1]^{\mathbb{N}}, \quad \script{F}=\otimes_{k\in \mathbb{N}} \script{B}([0,1]), \quad P=\otimes_{k \in \mathbb{N}}\mu$$ 
where $\mu$ is the standard Borel measure on $[0,1]$. Each outcome has the structure 
$\omega = (\omega_0, \omega_1, \omega_2, \ldots)$. Define $U:\Omega\rightarrow [0,1]$ and $S_k:\Omega\rightarrow [0,1]$ by $U(\omega)=\omega_0$ and $S_k(\omega) = \omega_k$ for $k \in \{1, 2, 3, \ldots\}$. 
Then  $(S_k)_{k=1}^{\infty}$ are i.i.d. $\script{U}[0,1]$ variables; $U \sim \script{U}[0,1]$ is independent of $(S_k)_{k=1}^{\infty}$.

Fix $A\subseteq [0,1]$ as a set  with inner measure 0 and outer measure 1 (such sets exist under the Axiom of Choice \cite{sierpinski-strange}\cite{alexander-strange}\cite{strange-uniform}).  In particular, neither $A$ nor its complement $A^c = [0,1] \setminus A$ contains a Borel subset of positive measure. Define $C:[0,1]\rightarrow Pow(\mathbb{R})$ by 
$$ C(s)= \left\{\begin{array}{cc}
\{0,1\} & \mbox{ if $s \in A$} \\
\{0,2\} & \mbox{ if $s \in A^c$} 
\end{array}\right.$$
A measurable choice function for this system is $\psi(s)=0$ for all $s \in [0,1]$   and so Assumption \ref{assumption:1} holds. 
However, it can be shown that Assumption \ref{assumption:2} fails. 

Consider any decision policy that is measurable, meaning that it 
produces valid random variables $(X_k)_{k=1}^{\infty}$ that satisfy $X_k \in C(S_k)$ surely for all $k \in \mathbb{N}$. For each fixed $k \in \mathbb{N}$ define the set
\begin{align*}
D_k &= \{\omega \in \Omega : \omega_k \in A\}
\end{align*}
It turns out that  every $\script{F}$-measurable subset of $D_k$ has measure 0, and every $\script{F}$-measurable subset of $D_k^c$ also has measure 0 (proof postponed to the next paragraph). Since $X_k$ is a valid random variable we know $\{X_k=1\}$ and $\{X_k=2\}$ are valid events (i.e., in $\script{F}$). Since $X_k\in C(S_k)$ we have 
$\{X_k=1\} \subseteq D_k$ and so $P[X_k=1]=0$. Likewise, $\{X_k=2\} \subseteq D_k^c$ and so $P[X_k=2]=0$.  It follows that $X_k=0$ almost surely for all $k \in \mathbb{N}$ and so $\lim_{k\rightarrow\infty} \frac{1}{k}\sum_{i=1}^k X_i = 0$ almost surely.   On the other hand, the (nonmeasurable) policy that chooses $X_k=1$ if $S_k\in A$ and $X_k=2$ otherwise surely yields $X_k \in C(S_k)$ and $X_k\geq 1$ for all $k \in \mathbb{N}$, so $\liminf_{k\rightarrow\infty} \frac{1}{k}\sum_{i=1}^kX_i \geq 1$ surely. 

It remains to show that every $\script{F}$-measurable subset of $D_k$ has probability 0 (the corresponding proof for $D_k^c$ is similar). Let $Z$ be a $\script{F}$-measurable subset of $D_k$.  Let $\pi_k(Z)$ be the projection of $Z$ onto the dimension $k$.  Then $\pi_k(Z) \subseteq A$. Since $Z \in \script{F}$, $Z$ can be viewed as a Borel measurable subset of the Polish space $[0,1]^{\mathbb{N}}$, and since the projection onto one dimension is a continuous function, the set $\pi_k(Z)$ is an analytic subset of $[0,1]$ and hence a Lebesgue measurable subset of $[0,1]$ \cite{srivastava-borel}. It follows that $\pi_k(Z)=B \cup R$ where $B$ is a Borel set and $R$ is a subset of a Borel set $\tilde{R}$ with $\mu(\tilde{R})=0$. Since $B \subseteq \pi_k(Z)\subseteq A$ we have $\mu(B)=0$ (recall that all Borel measurable subsets of $A$ have measure 0).  Therefore
\begin{align*}
P[Z] &\leq P\left[\omega \in \Omega : \omega_k \in B \cup \tilde{R}\right] \\
&= \mu(B \cup \tilde{R}) \\
&\leq \mu(B) + \mu(\tilde{R})=0
\end{align*}

\subsection{Randomization without deterministic measurable choice} 

This example shows a probability space with 
random elements $X_k \in C(S_k)$ for all $k \in \mathbb{N}$, so that a form of randomized choice exists, without the measurable choice assumption (Assumption \ref{assumption:1}).  
Famous examples in the field of descriptive set theory by Blackwell \cite{blackwell-borel-not-containing-graph},  Novikoff \cite{novikoff-borel-example}, Sierpi{\'n}ski  \cite{sierpinski-borel-example}, and Addison \cite{addison-borel-example} prove existence of a Borel measurable set $A \subseteq [0,1]^2$ with a projection $\pi_1(A)$ onto the first component that satisfies $\pi_1(A)=[0,1]$ and  such that there is no measurable   function $\psi:[0,1]\rightarrow [0,1]$ that satisfies $(x, \psi(x)) \in A$ for all $x \in [0,1]$ (see also Example 5.1.7 in \cite{srivastava-borel}). Define  
$$C(s) = \{y: (s,y) \in A\} \quad \forall s \in [0,1]$$
Assumption \ref{assumption:2} holds for this system because $\{(s, x) \in [0,1]^2 :  x \in C(s)\} = A$ is a Borel set, while Assumption \ref{assumption:1} fails because there is no measurable choice function $\psi(s)$.  Nevertheless a probability space can have i.i.d. random vectors $(S_k, X_k)_{k=1}^{\infty}$ that satisfy $X_k \in C(S_k)$ surely for all $k \in \mathbb{N}$ as follows: Let $(U_k)_{k=0}^{\infty}$ be i.i.d. $\script{U}[0,1]$ variables.  Since $A$ is an uncountably infinite Borel measurable subset of $\mathbb{R}^2$, there is an isomorphism $b:[0,1]\rightarrow A$.  Define $(S_k, X_k) = b(U_k)$ for all $k \in \mathbb{N}$.  Then  $(S_k, X_k)_{k=1}^{\infty}$ are indeed i.i.d. random vectors and $(S_k, X_k) \in A$  and so $X_k \in C(S_k)$ surely for all $k \in \mathbb{N}$.  Fix $k \in \mathbb{N}$. Using randomization variable $U_0$ with Corollary \ref{cor:sure-kallenberg} implies $X_k=h_k(S_k, W_k)$ for some Borel measurable function $h_k$ and some random variable $W_k\sim \script{U}[0,1]$ that is independent of $S_k$.  Since Assumption \ref{assumption:2} holds, any probability space that contains a random element $(S, W)$ with the same distribution as $(S_k, W_k)$ yields  
$$ P[h_k(S,W)\in C(S)]=1$$

\section{Conclusion} 

This paper considers sequences of Borel measurable functions 
where each function $X_k$ 
is constrained to be measurable with respect to the sigma algebra generated
by the union of an arbitrary number of sigma 
algebras associated
with index $k$. Specifically, each $X_k$ is $\sigma(\cup_{j \in J_k}\script{H}_j)$-measurable, 
where $J_k$ is an arbitrary index set and $\script{H}_j$ is a sigma algebra for each $j \in J_k$.
It is shown that each $X_k$ can be expressed
as the composition of a Borel function $h_k$ with some 
real-valued measurable functions $Y_j$ for $j \in J_k$, each $Y_j$ being
measurable with respect to the sigma algebra $\script{H}_j$.  
The same $Y_j$ functions can be used to 
represent the influence of $\script{H}_j$ for all indices $k$ in which $j \in J_k$. 
For applications to stochastic control, this enables functional representations
of all possible decision vectors that satisfy causality and measurability constraints.  This leads to 
a refined theorem on network capacity for opportunistic scheduling in time-varying wireless
networks.  The theorem uses two measurability assumptions, including an assumption 
on existence of a measurable choice function.  By utilizing classical pathological counter-examples in the field of descriptive set theory, an example opportunistic scheduling system is developed for which a nonmeasurable policy yields significantly better time averages in comparison to any measurable policy.

\section*{Acknowledgements} 

This work was supported in part by one or more of: NSF CCF-1718477, NSF SpecEES 1824418.

\bibliographystyle{unsrt}
\bibliography{../../../../latex-mit/bibliography/refs}
\end{document}